\crefname{equation}{}{} 
\crefname{subsection}{Subsection}{Subsection} 
\colorlet{refkey}{orange!20}
\colorlet{labelkey}{blue!30}
\numberwithin{equation}{section}
\newtheorem{theorem}{Theorem}[section]
\newtheorem{proposition}[theorem]{Proposition}
\newtheorem{lemma}[theorem]{Lemma}
\crefname{claim}{Claim}{Claims}
\newtheorem*{question*}{Question}
\theoremstyle{definition}
\newtheorem{definition}[theorem]{Definition}
\newtheorem*{definition*}{Definition}
\theoremstyle{remark}
\newtheorem*{remark}{Remark}
\newcommand{\snorm}[1]{\lVert#1\rVert}
\newcommand{\mb}{\mathbb}
\newcommand{\mbm}{\mathbbm}
\newcommand{\mbf}{\mathbf}
\newcommand{\ol}{\overline}
\newcommand{\on}{\operatorname}
\title{Diagonal Ramsey via effective quasirandomness}
\author[Sah]{Ashwin Sah}
\thanks{}
\address{Department of Mathematics, Massachusetts Institute of Technology, Cambridge, MA 02139, USA}
\email{asah@mit.edu}
\begin{document}
\begin{abstract}
We improve the upper bound for diagonal Ramsey numbers to
\[R(k+1,k+1)\le\exp(-c(\log k)^2)\binom{2k}{k}\]
for $k\ge 3$. To do so, we build on a quasirandomness and induction framework for Ramsey numbers introduced by Thomason and extended by Conlon, demonstrating optimal ``effective quasirandomness'' results about convergence of graphs. This optimality represents a natural barrier to improvement.
\end{abstract}

\maketitle

\section{Introduction}\label{sec:introduction}
The Ramsey number $R(k,\ell)$, introduced by Ramsey \cite{Ram29} in relation to logic, is the smallest positive integer $n$ such that every graph on $n$ vertices contains a subgraph isomorphic to $K_k$ (the complete graph on $k$ vertices) or $\ol{K}_\ell$ (the empty graph on $\ell$ vertices), i.e., contains a clique of size $k$ or an independent set of size $\ell$. Erd\H{o}s and Szekeres \cite{ES35} gave a classic upper bound
\begin{equation}\label{eq:erdos-szekeres}
R(k+1,\ell+1)\le\binom{k+\ell}{k}.
\end{equation}
This stood for a long time, until R\"odl (unpublished) in the $1980$'s showed
\[R(k+1,\ell+1)\le\frac{\binom{k+\ell}{k}}{c\log^c(k+\ell)}\]
for some $c > 0$. The weaker bound $6\binom{k+\ell}{k}/\log\log(k+\ell)$ appears in a survey on Ramsey theory by Graham and R\"odl \cite{GR87}.

Thomason \cite{Tho88} showed that for some $A > 0$
\[R(k+1,\ell+1)\le k^{-\ell/(2k)+A/\sqrt{\log k}}\binom{k+\ell}{k}\]
when $k\ge\ell$, which is an improvement of a polynomial factor in $k$ over the Erd\H{o}--Szekeres bound for $k,\ell$ of the same order. This stood until Conlon \cite{Con09} demonstrated
\[R(k+1,k+1)\le k^{-c\log k/\log\log k}\binom{2k}{k}\]
and a similar upper bound for $R(k+1,\ell+1)$ for $k,\ell$ roughly the same order.

We improve this result. In doing so we develop a variety of tools for handling effective quasirandomness estimates, allowing us to extract optimal estimates. As we will discuss in \cref{sub:introduction-quasirandomness}, this in particular represents the natural limit of a quasirandomness framework for bounding Ramsey numbers initiated by Thomason \cite{Tho88}.
\begin{theorem}\label{thm:main}
There is an absolute constant $c > 0$ such that for $k\ge 3$,
\[R(k+1,k+1)\le e^{-c(\log k)^2}\binom{2k}{k}.\]
\end{theorem}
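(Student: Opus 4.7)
The plan is to extend the Thomason--Conlon framework with a sharpened effective quasirandomness analysis. The outer skeleton is a strong induction on $k+\ell$ aiming for a refined bound on $R(k+1,\ell+1)$. For a graph $G$ on $N$ vertices with edge density $p$, the Erd\H{o}s--Szekeres step passes to $N(v)$ or $\ol{N(v)}$ for some vertex $v$ and inducts with shifted parameters. Thomason's key insight is that if the graph contains a vertex $v$ whose neighborhood density $p_v$ deviates from $p$ by some $\delta$, we save a factor depending on $\delta$ beyond the plain induction, because the recursive subproblem occurs in a denser (resp.\ sparser) setting that aligns with the direction of the Ramsey bound we seek.

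The main loop is thus a dichotomy: either (a) there is a vertex with sizable density deviation, in which case we recurse with a ``density increment'' gain of $\delta$, or (b) nearly all neighborhoods have density within $\delta$ of $p$, i.e., the graph is effectively quasirandom. In case (b) one extracts further structure -- typically a large common neighborhood (``book''), or a substructure detected via codegree estimates -- on which the induced graph behaves in a useful way. Iterating through roughly $\Theta(\log k)$ levels, with $\Theta(\log k)$ savings per level, should produce the target factor $e^{-c(\log k)^2}$.

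The crucial new ingredient, and the principal obstacle, is an \emph{optimal} quantitative effective quasirandomness estimate: roughly, if the neighborhood density distribution of $G$ has variance at most $\delta^2$ (and controlled higher moments), one must still be able to isolate a substantial induced subgraph carrying a density bias matching what one would expect from random fluctuations. I would pursue this by tracking moments such as $\sum_v(p_v - p)^2$ and codegree irregularities $\sum_{u,v}|d(u,v) - p^2 N|^2$, combining Jensen/Cauchy--Schwarz with spectral or convexity inputs to pinpoint sharp thresholds at which the recursion continues. The quantitatively delicate point is that the loss of vertices in descending to a substructure must be only a constant factor per unit of density gained; any worse loss reproduces Conlon's weaker bound.

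Finally, closing the induction requires packaging these gains into a self-consistent recursion for a function $f(k,\ell)$ with a clean base case, and carefully tracking both $k$ and $\ell$ through the unbalanced intermediate steps generated when we recurse into one-sided neighborhoods. The central difficulty throughout is proving the optimal effective quasirandomness result -- a purely extremal graph-theoretic statement whose sharpness, as the abstract emphasizes, is itself the natural barrier to further improvement via this framework.
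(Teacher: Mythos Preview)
Your proposal mischaracterizes the Thomason--Conlon framework and describes a mechanism that is not the one the paper uses. There is no density-increment dichotomy in the argument: one never splits into ``either some vertex has biased neighborhood, recurse with gain'' versus ``else quasirandom, find a book.'' Instead, the inductive hypothesis itself \emph{forces} quasirandomness unconditionally. If $G$ on $n=\alpha^\ast(k,\ell)\binom{k+\ell}{k}$ vertices avoids $K_{k+1}$ and $\ol{K}_{\ell+1}$, then every vertex has at most $R(k,\ell+1)-1$ neighbors and at most $R(k+1,\ell)-1$ non-neighbors; plugging in the inductive bounds pins every degree near $pn$ with $p=k/(k+\ell)$, and the same logic applied to pairs pins codegrees near $p^2n$. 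There is no ``either/or'': the degree and codegree parameters $\mu_{p,G},\nu_{p,G}$ are automatically small.

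The actual engine is also different from what you sketch. One does not pass to neighborhoods or books. Rather, with $\mu,\nu$ controlled, one expands $t_{K_r}(W_G)$ and $t_{K_{r-1}}(W_G)$ around $p^{\binom{r}{2}}$ and $p^{\binom{r-1}{2}}$ via the decomposition $W_G=p+f$, and the new effective-quasirandomness input is a sharp global bound $|t_J(f)|\le |t_{K_{2,2\lceil s/2\rceil}}(f)|^{1/4}$ for subgraphs $J$ on $s$ vertices, which lets the expansion converge provided $\nu$ is inverse-exponential in $r$. Independently, the Ramsey property gives $t_{K_r}(W_G)\le (R(k+2-r,\ell+1)/n)\,t_{K_{r-1}}(W_G)$ since each $(r-1)$-clique extends in fewer than $R(k+2-r,\ell+1)$ ways. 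Comparing these two expressions yields an inequality of the form $k\beta+\ell\gamma>(r-3)\ell/(2k)$ on the smoothness parameters of $\alpha$, contradicting the hypothesis when $r=\Theta_\varepsilon(\log k)$. The $e^{-c(\log k)^2}$ saving comes from choosing $\alpha(k,\ell)=C\exp(-\rho(\ell/k)\log(k+\ell))$ with $\rho$ of size $\Theta(r)$, not from iterating $\Theta(\log k)$ rounds of a recursion. Your outline, as written, is closer in spirit to later book-algorithm approaches and would not reproduce the paper's proof; the specific step you flag as ``the principal obstacle'' (a sharp variance-based substructure lemma) is not the lemma that is proved or used here.
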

As with \cite{Tho88,Con09}, we prove a result in the general regime where $k,\ell$ are roughly the same size.
\begin{theorem}\label{thm:main-2}
For each $\varepsilon\in(0,1/2)$ there is $c_\varepsilon > 0$ such that
\[R(k+1,\ell+1)\le e^{-c_\varepsilon(\log k)^2}\binom{k+\ell}{k}\]
whenever $\ell/k\in[\varepsilon,1]$ and $\ell\ge c_\varepsilon^{-1}$.
\end{theorem}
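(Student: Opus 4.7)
The plan is to prove \cref{thm:main-2} via essentially the same strategy that yields \cref{thm:main}, adapted to the slightly off-diagonal regime $\ell/k\in[\varepsilon,1]$. Fix such $\varepsilon$ and set $p = \ell/(k+\ell)\in[\varepsilon/(1+\varepsilon),1/2]$, so that $p$ is bounded away from $0$ and $1$ uniformly in $k$. I would argue by strong induction on $k+\ell$: given a graph $G$ on $N$ vertices with $N$ just below the claimed threshold $e^{-c_\varepsilon(\log k)^2}\binom{k+\ell}{k}$, the goal is to exhibit either a clique $K_{k+1}$ or an independent set $\ol{K}_{\ell+1}$.

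The key dichotomy at each inductive step is as follows. Either $G$ exhibits strong enough quasirandomness at density $p$, in which case a typical vertex $v$ has $|N(v)|\approx (1-p)N$ and $|V\setminus N(v)|\approx pN$, and we pass to the side of appropriate density to reduce $k$ or $\ell$ by one---making essentially no progress over \eqref{eq:erdos-szekeres} on that step; or, $G$ fails to be quasirandom in a quantifiable sense, and one can extract a large subset whose density deviates noticeably from $p$. In the latter case, the inductive hypothesis applied to the denser (respectively sparser) subset, combined with standard binomial comparisons, yields a multiplicative saving that compounds across the rounds of the induction.

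The quantitative improvement from $e^{-c(\log k)^2/\log\log k}$ in \cite{Con09} to $e^{-c(\log k)^2}$ comes from choosing the quasirandomness threshold optimally and from establishing sharp convergence rates for the density fluctuations---the ``effective quasirandomness'' of the title. Each failure-of-quasirandomness boost should contribute a multiplicative saving of roughly $e^{c\log k}$; provided $\Omega(\log k)$ such boosts occur over the course of the induction, one accumulates the claimed $e^{c(\log k)^2}$ factor. The dependence of $c_\varepsilon$ on $\varepsilon$ enters because the second-moment and book-type counting estimates degenerate as $p$ approaches $0$ or $1$; the hypothesis $\ell/k\in[\varepsilon,1]$ is exactly what guarantees that $p$ is uniformly bounded away from both endpoints, keeping these bounds effective.

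The main obstacle, and the technical heart of the paper, is proving that the effective quasirandomness inequalities are sharp enough that one can chain $\Omega(\log k)$ boosts together without losing the gain at any single step---this sharpness is what removes the $\log\log k$ loss present in Conlon's argument. A secondary issue in the off-diagonal case is the asymmetry between cliques and independent sets when $p\ne 1/2$; I would handle this by tracking both $k$ and $\ell$ through the induction and invoking the inductive hypothesis in its full asymmetric form on pairs $(k',\ell')$ with $k'+\ell' < k+\ell$, noting that the $(\log k)^2$ exponent is invariant under swapping $k\leftrightarrow\ell$ in the stated regime.
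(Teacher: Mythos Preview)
Your proposal describes a density-increment dichotomy (a quasirandom step versus a structured boost), but this is not the mechanism of the paper, and as written it is too vague to stand on its own.

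The actual argument has no dichotomy: a Ramsey graph on $n=\alpha^\ast(k,\ell)\binom{k+\ell}{k}$ vertices is \emph{automatically} quasirandom. By the inductive hypothesis every vertex has degree in a narrow window around $pn$ (its neighborhood is $K_k$-free and its non-neighborhood is $\ol{K}_\ell$-free), and by the same reasoning applied to common neighborhoods, every pair of vertices has codegree near $p^2n$; this is \cref{lem:degree-regularity}. There is no ``failure of quasirandomness'' branch to extract a saving from.

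What the paper does instead is count $K_{r-1}$ and $K_r$ (and their complements) in this quasirandom graph via \cref{thm:effective-distance}, and compare with the Ramsey-theoretic upper bound that every $(r-1)$-clique extends to fewer than $R(k+2-r,\ell+1)$ vertices. Combining the two yields an inequality of the form $k\beta+\ell\gamma>(r-3)\ell/(2k)$ (\cref{prop:inductive-step}), which is the discrete analogue of a lower bound on $-x\partial_x\log\alpha-y\partial_y\log\alpha$ and forces $\alpha$ to decay at the asserted rate. The induction scheme itself (\cref{thm:ramsey-r} together with \cref{lem:rho-bound,lem:alpha-smooth}) is taken essentially unchanged from \cite{Con09}; the entire improvement from $(\log k)^2/\log\log k$ to $(\log k)^2$ comes from \cref{thm:effective-distance} being sharp enough to allow $r=\Theta_\varepsilon(\log k)$ rather than $r=\Theta_\varepsilon(\log k/\log\log k)$, after which one simply optimizes $r$ in \cref{thm:ramsey-r}.

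Your sketch also leaves unresolved how a density-increment iteration would remain inside the regime $\ell'/k'\in[\varepsilon,1]$ after $\Omega(\log k)$ rounds of passing to neighborhoods, and why the per-round savings would compose multiplicatively without being eroded by the error terms.
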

Note that \cref{thm:main-2} immediately implies \cref{thm:main} (the condition $k\ge 3$ is put merely to ensure that $R(k+1,k+1) < \binom{2k}{k}$). Henceforth we will restrict our attention to this result.

\subsection{The Ramsey problem}\label{sub:introduction-ramsey}
The standard Erd\H{o}--Szekeres \cite{ES35} proof goes as follows. It is enough to show that $R(k+1,\ell+1)\le R(k,\ell+1)+R(k+1,\ell)$. If we have a graph on $R(k,\ell+1)+R(k+1,\ell)$ vertices, then any vertex $v$ has either at least $R(k,\ell+1)$ neighbors or at least $R(k+1,\ell)$ non-neighbors by the pigeonhole principle. Restricting to the first case and applying the definition of the Ramsey numbers, we see that we can find either a clique of size $k$ attached to $v$ (hence a clique of size $k+1$) or an independent set of size $\ell+1$, and similar in the other case. This finishes the proof.

It is evident that the crux in this argument is the simple fact that if a graph avoids $K_{k+1}$ and $\ol{K}_{\ell+1}$ (we call this a \emph{Ramsey graph}), then every vertex is adjacent to at most $R(k,\ell+1)-1$ vertices and non-adjacent to at most $R(k+1,\ell)-1$ vertices.

Thomason's \cite{Tho88} approach, on which Conlon's \cite{Con09} is based, considers the following stronger property of Ramsey graphs: every clique of size $r$ extends to at most $R(k+1-r,\ell+1)$ cliques of size $r+1$, and every independent set of size $r$ extends to at most $R(k+1,\ell+1-r)$ independent sets.

We make this idea more explicit here. Let $\alpha(k,\ell)$ be a slowly decaying function, and suppose that we know $R(a+1,b+1)\le \alpha(a,b)\binom{a+b}{a}$ for all $a+b < k + \ell$. Then, defining $\alpha^\ast(a,b) = \lfloor \alpha(a,b)\binom{a+b}{a}\rfloor/\binom{a+b}{a}$, the Erd\H{o}--Szekeres argument (equivalently $r=1$ of the observation above) generalizes to show all the degrees in a Ramsey graph $G$ on $n = f^\ast(k,\ell)\binom{k+\ell}{k}$ vertices are in
\[\left[\left(1-\frac{\alpha(k,\ell-1)}{\alpha^\ast(k,\ell)}\cdot\frac{\ell}{k+\ell}\right)n,\left(\frac{\alpha(k-1,\ell)}{\alpha^\ast(k,\ell)}\cdot\frac{k}{k+\ell}\right)n\right).\]
If $\alpha$ decays slowly enough, this implies that the degrees of $G$ are close to $pn$, where $p = k/(k+\ell)$. Similarly, one can give an upper bound on the total number of $K_3$ and $\ol{K}_3$ that is close to ``expected'' for a quasirandom graph of density $p$. On the other hand, this sum is controlled purely by the degree sequence (Goodman's formula). If $\alpha$ decays slowly enough, one derives a contradiction.

Conlon builds on this idea by computing the number of $K_r$ and $\ol{K}_r$ for larger $r$. Instead of exactly counting this sum via the degree sequence, he showed that $G$ must be quasirandom in an appropriate sense, and then showed that $K_r$ and $\ol{K}_r$ have counts near ``expected'' (controlling for major sources of deviation such as the edge count and triangle count). This gives a contradiction for $\alpha$ decaying quicker than in Thomason's argument.

We extend this framework by developing tools for effective quasirandomness, including optimal control of $H$-densities of graphs that are suitably regular. This allows us to extend the range of $r$ to which we can control the $K_r$- and $\ol{K}_r$-densities out to the optimal scale, which leads to an improved bound. See the discussion in \cref{sub:introduction-quasirandomness} for more discussion of effective quasirandomness.

\subsection{Other Ramsey results}\label{sub:introduction-other}
We do not focus on other well-studied natural variants of the Ramsey problem, including hypergraph Ramsey, multicolor Ramsey, Ramsey for subgraphs other than cliques, ordered Ramsey, explicit (non-random) constructions of Ramsey graphs, and other variants. We also remain concerned with the regime where $k,\ell$ are roughly the same size, although the regime where $\ell$ is constant has seen significant study. See \cite{CFS15} for a comprehensive survey of Ramsey theory, and \cite{Rad94} for a dynamic survey of Ramsey theory for small numbers.

We quickly remark on the lower bound for diagonal Ramsey numbers. It is a classic application of the probabilistic method by Erd\H{o}s \cite{E47} that
\[R(k,k)\ge(1+o(1))\frac{1}{e\sqrt{2}}k2^{k/2}.\]
This was improved by a factor of two by Spencer \cite{Spe77} to $R(k,k)\ge (1+o(1))k2^{(k+1)/2}/e$ using the Lov\'asz local lemma. This is where the lower bound remains.

\subsection{Quasirandomness and regularity}\label{sub:introduction-quasirandomness}
As mentioned, our treatment of the Ramsey problem involves the development of many effective quasirandomness tools. The notion of quasirandomness in graph theory dates back at least to Thomason \cite{Tho87} and Chung, Graham, and Wilson \cite{CGW89}. The correct notion of quasirandomness is that a graph $G$ has close to $p^4n^4$ ordered cycles, where $n = |V(G)|$. Here $p$ is assumed to be fixed and $n$ growing. This definition allows one to show that $G$ has approximately $p^{e(H)}n^{v(H)}$ ordered copies of the subgraph $H$ if $H$ has fixed size, i.e., gives a counting lemma.

We will study the setting in which we wish to show a graph $G$ behaves like $G(n,p)$, as above. However, we would be remiss if we did not discuss the related concept of graph regularity. Szemer\'edi's regularity lemma \cite{Sze78} demonstrates that every graph is $\varepsilon$-``close'' to a graph composed of a finite number of pieces which behave in a quasirandom manner between most pairs of pieces. Given a regular partition, one can also count subgraphs. However, the explicit dependence on $\varepsilon$ is quite bad, with potentially $\on{tow}(\varepsilon^{-c})$ pieces needed \cite{Gow97} (here $\on{tow}$ means taking an exponential tower of $2$'s of the specified length).

Much attention has been given to generalizing this result to hypergraphs (e.g. \cite{RNSSK05,Gow07}) and sparse graphs (e.g. \cite{GS05,BMS15,CGSS14,Sch16,CFZ14,CFZ15}) in connection with extremal graph theory and additive combinatorics. However, less attention has been given to giving sharp effective bounds in the setting where $n = |V(G)|$ does not escape to infinity and the size of the counted subgraph is allowed to grow, depending on the quality of the quasirandomness. The work of Thomason \cite{Tho88} and Conlon \cite{Con09} on the Ramsey problem can be interpreted in this light, and we devote a significant portion of this paper to explicitly working out such ``effective quasirandomness'' results before applying them to bound Ramsey numbers. (There is also work of Lov\'asz \cite{Lov11} on local Sidorenko inequalities which studies similar quantities to this line of work.)

Consider the following setup. We have a graph $G$ on $n$ vertices which in some sense has codegrees of pairs of vertices close to $p^2n$. Can we control the subgraph density of $H$, where $H$ has a growing number of vertices? The degrees of closeness, $\mu_{p,G}$ and $\nu_{p,G}$, are defined in \cref{sub:graph-effective}. It turns out that if $\mu$ and $\nu$ are decaying exponentially in $v(H)$ and $n$ is growing roughly square exponentially in $v(H)$, then we can give such effective bounds, which we state in \cref{thm:effective-distance}. Furthermore, the dependence between $\nu$ and $v(H)$ is optimal, as we demonstrate in \cref{sub:optimality}. This optimality implies that to improve the bound on Ramsey numbers, new ideas will be required.

To prove this theorem, we decompose the $H$-density of $G$ into a combination of $H'$-densities for a signed graphon $W_G-p$ which captures the distance between $G$ and $G(n,p)$. These $H'$-densities are in turn controlled via upper bounds by $K_{2,a}$-densities for various $a$. One can interpret Conlon's result as bounding these $H'$-densities by a ``local'' contribution, whereas our results piece together ``global'' contributions in order to obtain optimal bounds.

The idea of bounding graph densities above by $K_{a,b}$-densities has appeared in the study of independent sets and graph homomorphisms, including work of the author, Sawhney, Stoner, and Zhao \cite{SSSZ20} on reverse Sidorenko inequalities. However, we require inequalities that work for signed functions, which is a major departure from the methods in the graph homomorphism literature, and we require inequalities valid for $H$-densities when $H$ has triangles, which is often a significant hurdle in that line of work (see the survey of Zhao \cite{Zha17} for an overview of that area).

\subsection{Notation}\label{sub:notation}
We will write $[n] = \{1,\ldots,n\}$. The notations $o,O,\omega,\Omega$ have their usual asymptotic meanings, and $f = \Theta(g)$ means $f = O(g)$ and $g = O(f)$. Subscripts imply dependence of the implicit constants on the subscript. We use $K_n$ and $\ol{K}_n$ to denote the complete and empty graphs on $n$ vertices, respectively, and $P_n$ to mean the path with $n$ edges and $n+1$ vertices.

\subsection{Outline}\label{sub:outline}
In \cref{sec:graphons}, we give some background and conventions regarding graphons. In \cref{sec:effective-quasirandomness}, we establish key effective quasirandomness results, building up to \cref{thm:effective-distance}. We additionally prove that the relation between the quasirandomness and the size of subgraphs counted is optimal. In \cref{sec:ramsey}, we use these effective quasirandomness results in conjunction with the framework developed by Thomason \cite{Tho88} and Conlon \cite{Con09} to deduce \cref{thm:main-2}.

\subsection*{Acknowledgements}
We thank David Conlon and Yufei Zhao for helpful comments on the manuscript.

\section{Graphons}\label{sec:graphons}
We recall some basic notions from the theory of graph limits. A \emph{graphon} is a symmetric measurable function $W\colon [0,1]^2\to[0,1]$. These arise naturally as the limits of dense graphs under an appropriate topology (convergence of subgraph densities, or equivalently convergence in cut norm). Any graph $G$ with $n$ vertices gives a graphon in a natural way. Label its vertices by $[n]$, then partition $[0,1]^2$ into $n^2$ squares of equal dimensions, labeled $(i,j)$ for $i,j\in[n]$, and assign the value $1$ to $W$ on a block $(i,j)$ if and only if $i,j$ are adjacent in $G$.

For convenience, we will define a slightly different ``graphon'' associated to $G$. Let $\Omega_{V(G)}$ be the measure space on $V(G)$ assigning probability $1/|V(G)|$ to each element. Then associated to $G$ is the symmetric measurable function $W_G\colon\Omega_{V(G)}^2\to[0,1]$ given by
\[W_G(i,j) = \mbm{1}_{(i,j)\in E(G)}.\]
In general, we will find it more convenient to work with general (bounded) symmetric measurable functions until we restrict our study to graphs. However, we will not remark on unimportant measure-theoretic concerns (such as an inequality holding almost everywhere versus everywhere).

Let $\Omega$ be a measure space of total measure $1$. We define, for a graph $H$ and bounded symmetric measurable function $W\colon\Omega^2\to\mb{C}$, the $H$\emph{-density}
\[t_H(W) = \int_{\mbf{x}}\prod_{v_1v_2\in E(H)}W(x_{v_1},x_{v_2})\,d\mbf{x}.\]
Here the variable is $\mbf{x} = (x_v)_{v\in V(H)}$, and $d\mbf{x}$ is the product measure on $\Omega^{V(H)}$. These conventions will often go unstated in the future, and we will write $\int_{\mbf{x}}$ as $\mb{E}_{\mbf{x}}$, using the now standard expectation notation prevalent in extremal combinatorics.

We also use the following notation for ``codegrees''. If $\mbf{x} = (x_1,\ldots,x_r)$ then
\[W_{\mbf{x}} = \mb{E}_y\bigg[\prod_{i=1}^rW(x_i,y)\bigg].\]

\section{Effective Quasirandomness}\label{sec:effective-quasirandomness}
We develop effective quasirandomness estimates that will be needed later, and then demonstrate their optimality.

\subsection{Bounding densities of signed graphons}\label{sub:graphon-effective}
We bound densities $t_H(W)$ of bounded symmetric measurable functions $W\colon\Omega^2\to\mb{C}$ in terms of bipartite graph densities, with the aim of reducing control of convergence to control over codegrees of pairs of vertices.

We first establish some inequalities regarding $t_{K_{a,b}}$. It is worth noting that
\[t_{K_{a,b}}(W) = \mb{E}_{x_1,\ldots,x_a}W_{x_1,\ldots,x_a}^b,\]
as we will use this and similar expansions for $t_H$ when $H$ is bipartite repeatedly. This expression and the symmetry of $a,b$ immediately show that $t_{K_{a,b}}(W)\ge 0$ if $ab$ is even.
\begin{lemma}\label{lem:kab}
If $W\colon\Omega^2\to\mb{C}$ satisfies $\snorm{W}_\infty\le 1$ and $a,b,c$ are positive integers with $a\ge c$ and $c$ even, then
\[|t_{K_{a,b}}(W)|\le |t_{K_{c,b}}(W)|\]
\end{lemma}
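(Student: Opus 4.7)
The plan is to expand $t_{K_{a,b}}(W)$ into codegree form and then chain elementary estimates using the hypothesis $\snorm{W}_\infty\le 1$. Specifically, integrating out the part of size $a$ first (as in the identity already highlighted in the excerpt), I would write
\[t_{K_{a,b}}(W) = \mb{E}_{\mbf{y}} W_{\mbf{y}}^{\,a},\qquad \mbf{y}=(y_1,\ldots,y_b),\]
where $W_{\mbf{y}} = \mb{E}_x \prod_{j=1}^b W(x,y_j)$ is the codegree from \cref{sec:graphons}. Since the integrand defining $W_{\mbf{y}}$ is a product of $b$ factors of modulus at most $1$, I obtain the pointwise bound $|W_{\mbf{y}}|\le 1$, which is the one quantitative ingredient used in the argument.

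With that in hand, the inequality becomes a short monotonicity chain. By the triangle inequality,
\[|t_{K_{a,b}}(W)| \le \mb{E}_{\mbf{y}} |W_{\mbf{y}}|^{\,a},\]
and, because $|W_{\mbf{y}}|\le 1$ and $a\ge c$, the pointwise comparison $|W_{\mbf{y}}|^{a}\le|W_{\mbf{y}}|^{c}$ gives
\[|t_{K_{a,b}}(W)| \le \mb{E}_{\mbf{y}} |W_{\mbf{y}}|^{\,c}.\]
At this point I would invoke the parity hypothesis: since $c$ is even, $|W_{\mbf{y}}|^{c}$ coincides with $W_{\mbf{y}}^{c}$ (viewing $W_{\mbf{y}}$ as real, consistent with the preceding paragraph's remark that $t_{K_{a,b}}(W)\ge 0$ when $ab$ is even), so the right-hand side equals $t_{K_{c,b}}(W)$, which is itself non-negative and therefore equal to $|t_{K_{c,b}}(W)|$.

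There is no genuine obstacle here: the proof is just the triangle inequality composed with pointwise monotonicity of $t\mapsto t^{a}$ on $[0,1]$. The only point that demands care is the role of the parity of $c$, which is precisely what licenses the final identification $\mb{E}_{\mbf{y}}|W_{\mbf{y}}|^{c} = t_{K_{c,b}}(W)$; dropping evenness would leave $\mb{E}_{\mbf{y}}|W_{\mbf{y}}|^{c}$ on the right-hand side, which is not, in general, a $K_{c,b}$-density of $W$ itself and so would not close the loop back to $|t_{K_{c,b}}(W)|$.
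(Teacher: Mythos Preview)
Your argument is essentially identical to the paper's: both expand $t_{K_{a,b}}(W)=\mb{E}_{\mbf{y}}W_{\mbf{y}}^{\,a}$, use $|W_{\mbf{y}}|\le 1$ together with $a\ge c$ to pass to $\mb{E}_{\mbf{y}}|W_{\mbf{y}}|^{\,c}$, and then invoke the evenness of $c$ to identify this with $t_{K_{c,b}}(W)$. Your explicit caveat that the last identification treats $W_{\mbf{y}}$ as real is exactly the implicit assumption in the paper's one-line proof.
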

\begin{proof}
Create variables $\mbf{x} = (x_i)_{i\in[a]}$ and $\mbf{y} = (y_j)_{j\in[b]}$. We note $\snorm{W}_\infty\le 1$ implies $|W_{\mbf{y}}|\le 1$. Hence
\[|t_{K_{a,b}}(W)| = \bigg|\mb{E}_{\mbf{x},\mbf{y}}\bigg[\prod_{i\in[a]}\prod_{j\in[b]}W(x_i,y_j)\bigg]\bigg| = |\mb{E}_{\mbf{y}}W_{\mbf{y}}^a|\le\mb{E}_{\mbf{y}}|W_{\mbf{y}}|^a\le\mb{E}_{\mbf{y}}W_{\mbf{y}}^c = t_{K_{c,b}}(W).\qedhere\]
\end{proof}
We next establish a weak ``local'' bound for $t_H(W)$ in terms of these statistics. This estimate essentially appears in \cite{Con09}.
\begin{proposition}\label{prop:local-bound}
If $W\colon\Omega^2\to\mb{C}$ satisfies $\snorm{W}_\infty\le 1$ and $H$ is a graph containing a vertex of degree $d$, then
\[|t_H(W)|\le|t_{K_{2,d}}(W)|^{1/2}.\]
\end{proposition}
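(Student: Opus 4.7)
The plan is to exploit the specified degree-$d$ vertex $v\in V(H)$ by isolating its star to produce a codegree factor, and then control that factor via Cauchy--Schwarz. Let $u_1,\ldots,u_d$ be the neighbors of $v$ in $H$. Partitioning $E(H)$ into the edges avoiding $v$ and the star at $v$, and integrating first over $x_v$ so that the star becomes a $d$-fold codegree, I would rewrite
\[t_H(W) = \mb{E}_{(x_w)_{w \in V(H)\setminus\{v\}}}\left[\prod_{\substack{e \in E(H)\\ v \notin e}} W(x_{e_1},x_{e_2})\right] W_{x_{u_1},\ldots,x_{u_d}}.\]

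Next I would use $\snorm{W}_\infty\le 1$ to bound the edge product by $1$ in absolute value. The remaining integrand depends only on $(x_{u_1},\ldots,x_{u_d})$, so the expectations over the other $|V(H)|-1-d$ variables collapse trivially, yielding
\[|t_H(W)| \le \mb{E}_{x_{u_1},\ldots,x_{u_d}} \bigl|W_{x_{u_1},\ldots,x_{u_d}}\bigr|.\]
A final application of Cauchy--Schwarz (equivalently, Jensen) in the variable $\mbf{y} = (x_{u_1},\ldots,x_{u_d})$ gives
\[\mb{E}_{\mbf{y}} |W_{\mbf{y}}| \le \bigl(\mb{E}_{\mbf{y}} |W_{\mbf{y}}|^2\bigr)^{1/2} = |t_{K_{2,d}}(W)|^{1/2},\]
using the codegree expansion $t_{K_{2,d}}(W) = \mb{E}_{\mbf{y}} W_{\mbf{y}}^2$ recorded at the start of the subsection, together with the nonnegativity of this quantity (noted there for $ab$ even, which forces the absolute value on the right to be harmless).

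There is essentially no real obstacle here: the content is the decomposition along the star at $v$ plus a single application of Cauchy--Schwarz. The whole point of the estimate is its \emph{local} character, already emphasized by the paper: the only feature of $H$ that enters the bound is the existence of one vertex of degree $d$, while all other edges and vertices of $H$ are discarded by the crude pointwise bound $|W|\le 1$. This simplicity is also the source of its wastefulness for larger $H$, which motivates the sharper ``global'' estimates that follow.
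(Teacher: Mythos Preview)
Your proof is correct and follows essentially the same approach as the paper's: isolate the star at the degree-$d$ vertex, integrate it out to produce the codegree $W_{\mbf{y}}$, discard the remaining edges via $\snorm{W}_\infty\le 1$, collapse the unused variables, and finish with Cauchy--Schwarz to obtain $|t_{K_{2,d}}(W)|^{1/2}$. The exposition and the commentary on the ``local'' nature of the bound are also in line with the paper's presentation.
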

\begin{proof}
Let $v = |V(H)|$ and $V(H) = [v]$. Consider variables $\mbf{x}=(x_i)_{i\in[v]}$. Without loss of generality, suppose the vertex $1$ has degree $d$ and has neighborhood $N(d) = \{2,\ldots,d+1\}$. Let $\mbf{x}_{-1} = (x_i)_{2\le i\le v}$ and $\mbf{y} = (x_i)_{2\le i\le d+1}$. If $E'$ is the set of edges of $H$ not including the vertex $1$, then we have
\begin{align*}
|t_H(W)| &= \bigg|\mb{E}_{\mbf{x}}\bigg[\prod_{ij\in E(H)}W(x_i,x_j)\bigg]\bigg| = \bigg|\mb{E}_{\mbf{x}_{-1}}\bigg[\prod_{ij\in E'}W(x_i,x_j)\mb{E}_{x_1}\bigg[\prod_{i=2}^{d+1}W(x_1,x_i)\bigg]\bigg]\bigg|\\
&\le\mb{E}_{\mbf{x}_{-1}}|W_{\mbf{y}}| = \mb{E}_{\mbf{y}}|W_{\mbf{y}}|\le|\mb{E}_{\mbf{y}}W_{\mbf{y}}^2|^{1/2} = |t_{K_{2,d}}(W)|^{1/2}.\qedhere
\end{align*}
\end{proof}
We now establish a ``global'' bound for $t_H(W)$ in terms of these statistics, first in the bipartite case.
\begin{proposition}\label{prop:bipartite-global-bound}
If $W\colon\Omega^2\to\mb{C}$ satisfies $\snorm{W}_\infty\le 1$ and $H$ is a bipartite graph with bipartition $V(H) = A\sqcup B$ such that $|B| = h$ and $B$ has no vertices of degree at most $1$, then
\[|t_H(W)|\le|t_{K_{2,2\lceil h/2\rceil}}(W)|^{h/(2\lceil h/2\rceil)}.\]
\end{proposition}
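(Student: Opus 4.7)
The plan is to peel off the vertices of $B$ one H\"older step at a time, then invoke \cref{lem:kab}; writing $h' := 2\lceil h/2\rceil$, the target is exactly $|t_{K_{2,h'}}(W)|^{h/h'}$. First, since $H$ is bipartite I would expand
\[t_H(W) = \mb{E}_{\mbf{x}_A}\prod_{b\in B} W_{\mbf{x}_{N(b)}},\qquad W_{\mbf{x}_{N(b)}} := \mb{E}_{x_b}\prod_{a\in N(b)} W(x_a,x_b),\]
so that $t_H(W)$ is an expectation of a product of $|B|=h$ codegree factors, each bounded in modulus by $\snorm{W}_\infty^{d_b}\le 1$, where $d_b := |N(b)|\ge 2$ by the hypothesis on $B$.

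Next I would apply H\"older to this product with exponent $h$ on each of the $h$ factors (so $\sum 1/h = 1$) and then marginalize, since the $b$-th integrand depends only on $\mbf{x}_{N(b)}$. This gives
\[|t_H(W)|\le\prod_{b\in B}\bigl(\mb{E}_{\mbf{x}_{N(b)}}|W_{\mbf{x}_{N(b)}}|^h\bigr)^{1/h}.\]
Because $h\le h'$, the power-mean inequality then upgrades each factor's exponent to $h'$, yielding
\[|t_H(W)|\le\prod_{b\in B}\bigl(\mb{E}_{\mbf{x}_{N(b)}}|W_{\mbf{x}_{N(b)}}|^{h'}\bigr)^{1/h'}.\]

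The final step is to absorb each factor into $|t_{K_{2,h'}}(W)|^{1/h'}$. Since $h'$ is even, $|W_{\mbf{x}_{N(b)}}|^{h'} = W_{\mbf{x}_{N(b)}}^{h'}$, so the inner expectation is precisely $t_{K_{d_b,h'}}(W)$. Applying \cref{lem:kab} with $(a,b,c)=(d_b, h', 2)$---valid since $d_b\ge 2$ and $c=2$ is even---gives $t_{K_{d_b,h'}}(W)\le|t_{K_{2,h'}}(W)|$; multiplying these $h$ bounds closes out the inequality.

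The one delicate point is the identity $|W_{\mbf{x}_{N(b)}}|^{h'} = W_{\mbf{x}_{N(b)}}^{h'}$, which needs $h'$ to be even and thus forces the rounding up from $h$ to $2\lceil h/2\rceil$; simultaneously, the hypothesis $d_b\ge 2$ is what allows \cref{lem:kab} to be applied with $c=2$. Both hypotheses in the statement are therefore essential rather than cosmetic, and the main work of the proof is arranging the H\"older and power-mean steps so that these two ingredients can be used in tandem.
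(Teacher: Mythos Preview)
Your proof is correct and essentially matches the paper's argument. The only cosmetic difference is that the paper applies H\"older directly with exponent $h'=2\lceil h/2\rceil$ (valid on a probability space since the reciprocals sum to $h/h'\le 1$), whereas you first apply H\"older with exponent $h$ and then use the power-mean inequality to upgrade to $h'$; these two maneuvers are equivalent.
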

\begin{proof}
For $v\in V(H)$ let $N(v)$ be its neighborhood. Create variables $\mbf{x}=(x_a)_{a\in A}$ and $\mbf{x}'=(x_a')_{a\in A}$. Write $\mbf{x}_T = (x_a)_{a\in T}$ for a set $T\subseteq A$. Let $u = \lceil h/2\rceil$. We have
\[|t_H(W)|=\bigg|\mb{E}_{\mbf{x}}\prod_{b\in B}W_{\mbf{x}_{N(b)}}\bigg|\le\prod_{b\in B}\bigg|\mb{E}_{\mbf{x}}W_{\mbf{x}_{N(b)}}^{2u}\bigg|^{1/(2u)} = \prod_{b\in B}|t_{K_{|N(b)|,2u}}(W)|^{1/(2u)}\]
by definition; H\"older's inequality; and definition, respectively. Applying \cref{lem:kab}, since $|N(b)|\ge 2$ for $b\in B$ we deduce
\[|t_H(W)|\le\prod_{b\in B}|t_{K_{2,2u}}(W)|^{1/(2u)} = |t_{K_{2,2u}}(W)|^{h/(2u)}.\qedhere\]
\end{proof}
Finally, we establish a ``global'' bound for $t_H(W)$ in the general case.
\begin{proposition}\label{prop:general-global-bound}
If $W\colon\Omega^2\to\mb{C}$ satisfies $\snorm{W}_\infty\le 1$ and $H$ is a graph with $h$ vertices and no isolated vertices, then
\[|t_H(W)|\le|t_{K_{2,2\lceil h/2\rceil}}(W)|^{1/4}.\]
\end{proposition}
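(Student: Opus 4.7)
The plan is to reduce to \cref{prop:bipartite-global-bound} by applying Cauchy--Schwarz in the variables corresponding to an independent set of $H$. Concretely, I would choose a large independent set $B \subseteq V(H)$ all of whose vertices have $H$-degree at least $2$ (postponing the treatment of leaves), and set $A = V(H) \setminus B$, which is a vertex cover. Integrating out the $B$-variables first yields the identity
\[
t_H(W) = \mb{E}_{\mbf{x}_A}\bigg[\prod_{e \in E(H[A])} W(x_{e_1},x_{e_2})\bigg] \prod_{b \in B} W_{\mbf{x}_{N(b)}},
\]
and the bracketed product has modulus at most $1$. Applying Cauchy--Schwarz in $\mbf{x}_A$ gives
\[
|t_H(W)|^2 \le \mb{E}_{\mbf{x}_A} \prod_{b \in B} W_{\mbf{x}_{N(b)}}^2,
\]
and the right-hand side is exactly $t_{H^\star}(W)$ for the bipartite graph $H^\star$ on $A \sqcup (B \times \{0,1\})$ with edges $\{a, (b, i)\}$ for $\{a, b\} \in E(H)$ and $i \in \{0, 1\}$.

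Next I would apply \cref{prop:bipartite-global-bound} to $H^\star$, where the side $B \times \{0, 1\}$ has size $2|B|$ (already even) and every vertex there has degree $\deg_H(b) \ge 2$. Since $2|B|$ is even, \cref{prop:bipartite-global-bound} yields $|t_{H^\star}(W)| \le |t_{K_{2,2|B|}}(W)|$, and combining gives $|t_H(W)|^2 \le |t_{K_{2,2|B|}}(W)|$. To convert this into a bound in terms of $|t_{K_{2, 2\lceil h/2\rceil}}(W)|$, I would use \cref{lem:kab} when $|B| \ge \lceil h/2\rceil$ and Jensen's inequality (nesting of $L^p$-norms on the codegree function) when $|B| < \lceil h/2\rceil$; the latter gives $|t_{K_{2,2|B|}}(W)| \le |t_{K_{2,2\lceil h/2\rceil}}(W)|^{|B|/\lceil h/2\rceil}$. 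Using $|t_{K_{2,\cdot}}(W)| \le 1$, both routes deliver the desired bound $|t_H(W)| \le |t_{K_{2,2\lceil h/2\rceil}}(W)|^{1/4}$ provided $|B| \ge \lceil h/2\rceil/2$.

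The main obstacle is producing an independent set $B$ of this size consisting of non-leaves. When $H$ has a vertex of degree at least $\lceil h/2\rceil$ it is more economical to apply \cref{prop:local-bound} instead and then use the same Jensen upgrade, so without loss of generality $\Delta(H) < \lceil h/2 \rceil$. In that regime a Caro--Wei bound delivers a large enough independent set of $H$, and the few leaf vertices can either be removed in advance or absorbed into $B$ at the cost of replacing the factor $|t_{K_{|N(b)|, 2|B|}}(W)| \le |t_{K_{2, 2|B|}}(W)|$ of \cref{lem:kab} by the analogous Cauchy--Schwarz inequality $|t_{K_{1, 2|B|}}(W)| \le |t_{K_{2, 2|B|}}(W)|^{1/2}$ for leaf neighborhoods. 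Disjoint components, if present, are handled by the multiplicativity of $t_H$ together with a H\"older-type combining inequality on the codegree function.
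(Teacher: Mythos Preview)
There is a genuine gap in your plan. The crux is the claim that, when $\Delta(H) < \lceil h/2\rceil$, a Caro--Wei bound produces an independent set $B$ of size at least $\lceil h/2\rceil/2 \approx h/4$. This is false in general: Caro--Wei only yields $\alpha(H)\ge h/(\Delta(H)+1)$, which for $\Delta(H)$ close to $h/2$ gives merely $\alpha(H)\gtrsim 2$. A concrete obstruction is the Paley graph on $q=13$ vertices: it is $6$-regular (so $\Delta=6<7=\lceil 13/2\rceil$), has no leaves, yet has independence number $3<4=\lceil\lceil 13/2\rceil/2\rceil$. With $|B|=3$ your chain of inequalities only reaches $|t_H(W)|\le|t_{K_{2,14}}(W)|^{3/14}$, strictly weaker than the target exponent $1/4$. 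More generally, Paley graphs on $q$ vertices have $\Delta=(q-1)/2<\lceil q/2\rceil$ but $\alpha(H)=O(\sqrt{q})$, so for large $q$ no independent set of size $\approx q/4$ exists at all, and neither the leaf-absorption trick nor the component decomposition helps.

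The paper circumvents this entirely by not requiring $B$ to be independent. It takes \emph{any} bipartition $V(H)=A\sqcup B$ in which every vertex has a neighbour on the other side (obtained by $2$-colouring a spanning forest), which automatically yields $\max(|A|,|B|)\ge\lceil h/2\rceil$ regardless of $\alpha(H)$. It then applies Cauchy--Schwarz \emph{twice}---once over the $A$-variables and once over the $B$-variables---producing a bipartite graph $H'$ on $(A\cup B)\times\{0,1\}$ in which every vertex has degree at least $2$, since the doubling supplies the second neighbour. This costs a fourth power rather than a square, which is exactly the exponent $1/4$ in the statement, and $H'$ is then amenable to the H\"older argument of \cref{prop:bipartite-global-bound} on the larger side. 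The idea you are missing is that the minimum-degree-$2$ condition should be \emph{manufactured} by doubling rather than sought within $H$ itself.
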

\begin{proof}
Consider a partition $V(H) = A\sqcup B$ such that every $v\in A$ has a neighbor in $B$ and every $v\in B$ has a neighbor in $A$. This is easily done, for example, by $2$-coloring any spanning forest of $H$. Let $E(A)$ and $E(B)$ be the set of edges of $H$ internal to $A$ and $B$, respectively, and let $E(A,B)$ be the set of cross-edges. We create variables $\mbf{x}=(x_a)_{a\in A}$, $\mbf{x}' = (x_a')_{a\in A}$ and $\mbf{y} = (y_b)_{b\in B}$, $\mbf{y}' = (y_b')_{b\in B}$. Then
\begin{align*}
|t_H(W)|^4 &= \bigg|\mb{E}_{\mbf{x},\mbf{y}}\bigg[\prod_{a_1a_2\in E(A)}W(x_{a_1},x_{a_2})\prod_{ab\in E(A,B)}W(x_a,y_b)\prod_{b_1b_2\in E(B)}W(y_{b_1},y_{b_2})\bigg]\bigg|^4\\
&\le\bigg|\mb{E}_{\mbf{x}}\bigg[\bigg|\mb{E}_{\mbf{y}}\bigg[\prod_{ab\in E(A,B)}W(x_a,y_b)\prod_{b_1b_2\in E(B)}W(y_{b_1},y_{b_2})\bigg]\bigg|\bigg]\bigg|^4\\
&\le\bigg|\mb{E}_{\mbf{x}}\bigg[\bigg|\mb{E}_{\mbf{y}}\bigg[\prod_{ab\in E(A,B)}W(x_a,y_b)\prod_{b_1b_2\in E(B)}W(y_{b_1},y_{b_2})\bigg]\bigg|^2\bigg]\bigg|^2\\
&=\bigg|\mb{E}_{\mbf{x},\mbf{y},\mbf{y}'}\bigg[\prod_{ab\in E(A,B)}W(x_a,y_b)W(x_a,y_b')\prod_{b_1b_2\in E(B)}W(y_{b_1},y_{b_2})W(y_{b_1}',y_{b_2}')\bigg]\bigg|^2\\
&\le\bigg|\mb{E}_{\mbf{y},\mbf{y}'}\bigg[\bigg|\mb{E}_{\mbf{x}}\bigg[\prod_{ab\in E(A,B)}W(x_a,y_b)W(x_a,y_b')\bigg]\bigg|\bigg]\bigg|^2\\
&\le\mb{E}_{\mbf{y},\mbf{y}'}\bigg[\bigg|\mb{E}_{\mbf{x}}\bigg[\prod_{ab\in E(A,B)}W(x_a,y_b)W(x_a,y_b')\bigg]\bigg|^2\bigg]\\
&=\mb{E}_{\mbf{x},\mbf{x}',\mbf{y},\mbf{y}'}\bigg[\prod_{ab\in E(A,B)}W(x_a,y_b)W(x_a,y_b')W(x_a',y_b)W(x_a',y_b')\bigg],
\end{align*}
using the definition of $t_H$; rearrangement and $\snorm{W}_\infty\le 1$; Cauchy--Schwarz; expansion; rearrangement and $\snorm{W}_\infty\le 1$; Cauchy--Schwarz; and expansion, respectively.

Define bipartite graph $H'$ with $V(H') = (A\cup B)\times\{0,1\}$ such that: first, $(a,c)$ neighbors $(b,d)$ when $a\in A$, $b\in B$, and $ab\in E(A,B)$, and second, these are all its edges. We have shown so far that
\[|t_H(W)|^4\le t_{H'}(W).\]
Now note $V(H') = A'\sqcup B'$, where $A' = A\times\{0,1\}$ and $B' = B\times\{0,1\}$, and this partition respects the bipartite structure of $H'$. For $v\in V(H')$, let $N(v)$ be the neighborhood of $v$ in $H'$. By the choice of the original bipartition $V(H) = A\sqcup B$ and the definition of $H'$, we see that $|N(v)|\ge 2$ for all $v\in V(H')$.

Create variables $\mbf{z} = (z_a)_{a\in A'}$, and write $\mbf{z}_T = (z_a)_{a\in T}$ for a set $T\subseteq A'$. Then because $|B'| = 2|B|$ is even,
\begin{align*}
|t_{H'}(W)| &= \bigg|\mb{E}_{\mbf{z}}\prod_{b\in B'}W_{\mbf{z}_{N(b)}}\bigg|\le\prod_{b\in B'}\bigg|\mb{E}_{\mbf{z}}W_{\mbf{z}_{N(b)}}^{|B'|}\bigg|^{1/|B'|}=\prod_{b\in B'}|t_{K_{|N(b)|,|B'|}}(W)|^{1/|B'|}\\
&\le\prod_{b\in B'}|t_{K_{2,|B'|}}(W)|^{1/|B'|} = |t_{K_{2,2|B|}}(W)|
\end{align*}
by H\"older's inequality and \cref{lem:kab}, using $|N(b)|\ge 2$ for $b\in B'$. Thus we conclude that
\[|t_H(W)|^4\le|t_{H'}(W)|\le|t_{K_{2,2|B|}}(W)|.\]
We can repeat the same argument with the roles of $A'$ and $B'$ switched, which demonstrates the same with $|B|$ replaced by $|A|$. In particular, we have
\[|t_H(W)|^4\le|t_{H'}(W)|\le|t_{K_{2,2\max(|A|,|B|)}}(W)|\le |t_{K_{2,2\lceil h/2\rceil}}(W)|,\]
the third inequality by applying \cref{lem:kab} again, noting $\max(|A|,|B|)\ge\lceil h/2\rceil$ since we have $|A| + |B| = h$.
\end{proof}

\subsection{Effective convergence of graph densities}\label{sub:graph-effective}
We use the bounds established in \cref{sub:graphon-effective} to demonstrate effective convergence rates of subgraph densities for somewhat large subgraphs. We first establish some notation that will see continued use.
\begin{definition}\label{def:centered-graph}
Given a graph $G$ with $n$ vertices and $p\in(0,1)$, let $f_{p,G}(x,y) = W_G(x,y) - p$, and let
\[\mu_{p,G} = \max_{x\in V(G)}|\mb{E}_yf_{p,G}(x,y)|,\qquad\nu_{p,G} = \max_{x\neq y\in V(G)}\max(0,\mb{E}_zf_{p,G}(x,z)f_{p,G}(z,y)).\]
The expectations can be rewritten $(f_{p,G})_x$ and $(f_{p,G})_{x,y}$, respectively. Note that the latter definition does not take an absolute value, so is one-sided.
\end{definition}
One could prove analogues of the following results given a two-sided guarantee, but this version has slightly more power and will be needed in our study of the Ramsey problem.

We have $\snorm{f_{p,G}}_\infty\le\max(p,1-p)\le 1$. We first give bounds for $K_{2,a}$-densities of $f_{p,G}$ in terms of these statistics. The following lemma is similar to bounds appearing in \cite{Con09}.
\begin{proposition}\label{prop:graph-k2b}
If $G$ is a graph on $n$ vertices and $a\ge 1$ then
\[|t_{K_{2,a}}(f_{p,G})|\le 2\nu_{p,G}^a+2n^{-2/3}.\]
\end{proposition}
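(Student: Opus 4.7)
The plan is to unwind $t_{K_{2,a}}(f_{p,G})$ through the codegree identity $t_{K_{2,a}}(W)=\mathbb{E}_{x_1,x_2}W_{x_1,x_2}^a$. Writing $f=f_{p,G}$ and $F(x_1,x_2)=f_{x_1,x_2}=\mathbb{E}_z f(x_1,z)f(z,x_2)$, the identity gives $t_{K_{2,a}}(f)=\mathbb{E}_{x_1,x_2}F(x_1,x_2)^a$. I would first observe that the $y$-symmetric factoring $t_{K_{2,a}}(f)=\mathbb{E}_{y_1,\ldots,y_a}(\mathbb{E}_x\prod_i f(x,y_i))^2\ge 0$ makes the absolute value unnecessary, and then split $\mathbb{E}_{x_1,x_2}$ into the diagonal $x_1=x_2$ and off-diagonal $x_1\ne x_2$ cases. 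Since $|F(x,x)|=|\mathbb{E}_y f(x,y)^2|\le 1$, the diagonal contributes at most $1/n\le n^{-2/3}$, which is absorbed into the error term.

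For the off-diagonal contribution I would split by parity of $a$. When $a$ is odd, the definition of $\nu$ gives $F(x_1,x_2)\le\nu$ for $x_1\ne x_2$, and since $s\mapsto s^a$ is monotone for odd $a$, $F(x_1,x_2)^a\le\nu^a$ pointwise. This immediately yields $t_{K_{2,a}}(f)\le\nu^a+1/n\le 2\nu^a+2n^{-2/3}$.

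When $a$ is even, I would decompose $F^a=F_+^a+F_-^a$ where $F_\pm=\max(0,\pm F)$. The positive part is controlled pointwise: $F_+(x_1,x_2)\le\nu$ for $x_1\ne x_2$, so $\mathbb{E}_{x_1\ne x_2}F_+^a\le\nu^a$. The critical input for controlling the negative part is the PSD identity $\mathbb{E}_{x_1,x_2}F(x_1,x_2)=\mathbb{E}_z(\mathbb{E}_x f(x,z))^2\ge 0$, equivalent to saying $F$ is a positive semidefinite kernel. Removing the $O(1/n)$ diagonal correction gives $\mathbb{E}_{x_1\ne x_2}F_-\le\mathbb{E}_{x_1\ne x_2}F_+ +O(1/n)\le\nu+O(1/n)$.

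The main obstacle is then converting this first-moment bound on $F_-$ into an $a$-th moment bound of the right shape. The naive estimate $\mathbb{E}F_-^a\le\mathbb{E}F_-\le\nu+O(1/n)$ (using $F_-\le 1$) would yield $t_{K_{2,a}}(f)\le\nu^a+\nu+O(1/n)$, which suffices when $\nu\lesssim n^{-2/3}$ but not otherwise. To bridge the gap I would also use the PSD consequence $|F(x,y)|\le\sqrt{F(x,x)F(y,y)}$, forcing the set $\{F<-\nu\}$ to lie in $\{F(x,x)F(y,y)>\nu^2\}$; combining a Markov-type tail estimate $\Pr[F_->t]\le(\nu+O(1/n))/t$ on the range $t>\nu$ with the pointwise PSD bound on that set should give $\mathbb{E}F_-^a\le\nu^a+O(n^{-2/3})$ after balancing. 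This distributional step is where I expect the exponent $2/3$ to emerge from optimizing the break between the Markov tail and the trivial bound $F_-\le 1$, and it is the delicate portion of the argument; the rest is a bookkeeping exercise combining the three estimates.
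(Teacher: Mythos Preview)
Your setup and odd-$a$ case are fine and match the paper. The even case, however, has a real gap.

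The PSD inequality $|F(x,y)|\le\sqrt{F(x,x)F(y,y)}$ is vacuous here: $F(x,x)=\mathbb{E}_z f(x,z)^2$ is the variance-type quantity $(d_x/n)(1-p)^2+(1-d_x/n)p^2\approx p(1-p)$, so it is typically $\Theta(1)$, not small. Hence the containment $\{F<-\nu\}\subseteq\{F(x,x)F(y,y)>\nu^2\}$ gives no useful restriction, and combining it with the first-moment Markov tail $\Pr[F_->t]\le(\nu+O(1/n))/t$ only yields $\mathbb{E}F_-^a=O(\nu)$, which is far from $\nu^a+O(n^{-2/3})$. There is no ``balancing'' that rescues this: you simply do not have enough input to squeeze the tail of $F_-$ down to size $\nu^a$.

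The missing idea is that the nonnegativity you exploited at degree $1$ is available at \emph{every} degree: $t_{K_{2,m}}(f)=\mathbb{E}_{x,y}F(x,y)^m\ge 0$ for all $m\ge 1$, since the side of size $2$ is even. Applying this at $m=a+1$ (odd) gives $\mathbb{E}F_-^{a+1}\le\mathbb{E}F_+^{a+1}$, hence
\[
\mathbb{E}|F|^{a+1}\le 2\,\mathbb{E}_{x,y}F_+^{a+1}\le 2\nu^{a+1}+2n^{-1},
\]
and then H\"older brings you back down:
\[
\mathbb{E}|F|^a\le(\mathbb{E}|F|^{a+1})^{a/(a+1)}\le 2\nu^a+2n^{-2/3},
\]
using $(x+y)^q\le x^q+y^q$ and $a/(a+1)\ge 2/3$. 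This is exactly where the $n^{-2/3}$ comes from, not from any tail-balancing argument.
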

\begin{proof}
Let $f = f_{p,G}$. First suppose $a = 2b$ is even, and let $P\subseteq V(G)^2$ be the set of $(x,y)$ with $f_{x,y} = \mb{E}_zf(x,z)f(z,y)\ge 0$. Note that $2$ is even, so
\[0\le t_{K_{2,2b+1}}(f) = \mb{E}_{x,y}f_{x,y}^{2b+1} = \mb{E}_{x,y}[\mbm{1}_{(x,y)\in P}|f_{x,y}|^{2b+1} - \mbm{1}_{(x,y)\notin P}|f_{x,y}|^{2b+1}].\]
Thus
\[\mb{E}_{x,y}|f_{x,y}|^{2b+1}\le 2\mb{E}_{x,y}\mbm{1}_{(x,y)\in P}f_{x,y}^{2b+1}\le 2(\nu_{p,G}^{2b+1}+n^{-1}),\]
the last inequality by definition of $\nu_{p,G}$ and since $|f_{x,x}|\le 1$, using that the event $x = y$ occurs with $1/n$ probability (recall $x,y$ are uniform over $V(G)$).

Thus
\[|t_{K_{2,2b}}(f_{p,G})|\le\mb{E}_{x,y}|f_{x,y}|^{2b}\le|\mb{E}_{x,y}|f_{x,y}|^{2b+1}|^{\frac{2b}{2b+1}}\le(2\nu_{p,G}^{2b+1}+2n^{-1})^{\frac{2b}{2b+1}}\le 2\nu_{p,G}^{2b}+2n^{-2/3}\]
by the triangle inequality; H\"older's inequality; the above; and the well-known inequality $(x+y)^q\le x^q+y^q$ for $q\in(0,1)$ and $x,y\ge 0$.

On the other hand, if $a=2b-1$ is odd, then
\[0\le t_{K_{2,2b-1}}(f_{p,G}) = \mb{E}_{x,y}f_{x,y}^{2b-1}\le\nu_{p,G}^{2b-1}+n^{-1},\]
the first inequality since $2$ is even, and the second by the definition of $\nu_{p,G}$ along with the fact that $x = y$ occurs with $1/n$ probability. We also implicitly used that $x\mapsto x^{2b-1}$ is a monotonic function on $\mb{R}$.
\end{proof}
Now we use the global bounds from \cref{sub:graphon-effective} to effectively bound the distance between $t_H(W_G)$ and $p^{e(H)}$ for graphs $H$.
\begin{theorem}\label{thm:effective-distance}
Let $H$ be a graph on $r$ vertices with $e(H)$ edges and $\Delta_H$ triangles. Let $G$ be a graph on $n$ vertices and let $p\in(0,1)$. Choose some $\nu\in[\nu_{p,G},1]$ so that $r\le\log(\nu^{-1})/(12\log(8/p))$ and $\nu^{-2r}\le n$, and let $f = f_{p,G}$, $\mu = \mu_{p,G}$. Then
\[\bigg|\frac{t_H(W_G)}{p^{e(H)}} - 1 - p^{-1}e(H)t_{K_2}(f) - p^{-3}\Delta_Ht_{K_3}(f)\bigg|\le2^{-2r}\nu^{7/6} + 3\binom{r+1}{4}p^{-2}\mu^2.\]
\end{theorem}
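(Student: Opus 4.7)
The plan is to write $W_G = p + f$ with $f = f_{p,G}$, and expand
\[
\frac{t_H(W_G)}{p^{e(H)}} = \sum_{S \subseteq E(H)} p^{-|S|} t_{H_S}(f),
\]
where $H_S = (V(H), S)$ and vertices not incident to $S$ integrate out trivially against the probability measure. The subsets $S = \emptyset$, $|S| = 1$, and $S$ spanning a triangle of $H$ contribute precisely the three main terms $1 + p^{-1}e(H)t_{K_2}(f) + p^{-3}\Delta_H t_{K_3}(f)$, so the task is to bound the remainder over $|S| = 2$ and over $|S| \ge 3$ with $S$ not a triangle.

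For $|S| = 2$ the shape of $H_S$ is either a cherry $P_2$ or two disjoint edges $2K_2$. Both satisfy $|t_{H_S}(f)| \le \mu^2$ directly: $|t_{K_2}(f)| = |\mathbb{E}_x \mathbb{E}_y f(x,y)| \le \mu$ gives $|t_{2K_2}(f)| = |t_{K_2}(f)|^2 \le \mu^2$, and $t_{P_2}(f) = \mathbb{E}_y(\mathbb{E}_x f(x,y))^2 \le \mu^2$. The number of pairs of distinct edges in $H$ is $\binom{e(H)}{2} \le \binom{\binom{r}{2}}{2} = 3\binom{r+1}{4}$, producing the $3\binom{r+1}{4}p^{-2}\mu^2$ part of the bound exactly.

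For $|S| \ge 3$ with $S$ not a triangle, I apply \cref{prop:general-global-bound}: writing $h = h(S) \ge 3$ for the number of non-isolated vertices of $H_S$, one has $|t_{H_S}(f)| \le |t_{K_{2,2\lceil h/2\rceil}}(f)|^{1/4}$. Combining with \cref{prop:graph-k2b}, the hypothesis $n \ge \nu^{-2r}$, and the subadditivity of $x\mapsto x^{1/4}$, this yields $|t_{H_S}(f)| \le 2^{1/4}\nu^{\lceil h/2\rceil/2} + 2^{1/4}n^{-1/6}$, and the $n^{-1/6}$ summand is absorbable into the first because $n^{-1/6} \le \nu^{r/3} \le \nu^{\lceil h/2\rceil/2}$ when $r \ge 3$. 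Grouping $S$ by $h$, there are at most $\binom{r}{h} 2^{\binom{h}{2}}$ such subsets, each of weight $p^{-|S|} \le p^{-\binom{h}{2}}$. The estimate then reduces to bounding $\sum_{h=3}^r \binom{r}{h}(2/p)^{\binom{h}{2}}\nu^{\lceil h/2\rceil/2}$ by $2^{-2r}\nu^{7/6}$ up to an absolute constant, and the hypothesis $r \le \log(\nu^{-1})/(12\log(8/p))$ — equivalently $\nu \le (p/8)^{12r}$ — is tuned precisely to deliver this.

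The main obstacle lies in this last estimate. The sum has exponentially many terms with a rapidly growing prefactor $(2/p)^{\binom{h}{2}}$, and the per-term $\nu$-gain $\nu^{\lceil h/2\rceil/2}$ degenerates to $\nu^1$ for the smallest $h \in \{3, 4\}$. Ensuring the entire sum fits under $2^{-2r}\nu^{7/6}$ requires using the full strength of the hypothesis $\nu \le (p/8)^{12r}$; the constants $12$ and $7/6$ are engineered precisely to give enough slack, with the worst case occurring at small $h$, and the remaining verification is careful but routine arithmetic.
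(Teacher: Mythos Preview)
Your decomposition and the handling of the $|S|\le 2$ terms are correct and match the paper (indeed $\binom{\binom{r}{2}}{2}=3\binom{r+1}{4}$, which is exactly the paper's $3\binom{r}{3}+3\binom{r}{4}$). The gap is at $h=4$. Applying only \cref{prop:general-global-bound} there gives $|t_{H_S}(f)|\le 2^{1/2}\nu^{\lceil 4/2\rceil/2}=2^{1/2}\nu$, so the $h=4$ contribution to your sum is of order $\binom{r}{4}(2/p)^{6}\nu$. For this to lie below $2^{-2r}\nu^{7/6}$ you would need $\binom{r}{4}(2/p)^{6}\le 2^{-2r}\nu^{1/6}$; but the hypothesis only furnishes the \emph{upper} bound $\nu\le (p/8)^{12r}$, whence $\nu^{1/6}\le(p/8)^{2r}$, making the right-hand side tiny rather than large. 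No arithmetic rescues this: the exponent $1$ on $\nu$ is strictly below the target $7/6$, and $\nu$ carries no lower bound. (Incidentally, $h=3$ never actually occurs, since three edges not forming a triangle already span at least four vertices.)

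The paper closes this gap by a short case analysis at $s=4$. If the four-vertex shape has a vertex of degree $3$, the local bound \cref{prop:local-bound} yields $|t_J(f)|\le|t_{K_{2,3}}(f)|^{1/2}\le 2\nu^{3/2}$; for $J=K_{2,2}$ the bipartite bound \cref{prop:bipartite-global-bound} gives $|t_{K_{2,2}}(f)|\le|t_{K_{2,4}}(f)|\le 4\nu^{2}$; and for $J=P_3$ a direct Cauchy--Schwarz $|t_{P_3}(f)|\le|t_{K_{2,2}}(f)|^{1/2}|t_{K_{1,2}}(f)|^{1/2}\le 4\nu^{3/2}$. Every four-vertex case thus yields at least $\nu^{3/2}$, and with the resulting uniform estimate $|t_J(f)|\le 4\nu\cdot\nu^{s/12}$ for all $s\ge 4$ the summation $\nu\sum_{s=4}^r\binom{r}{s}(2/p)^{\binom{s}{2}}\nu^{s/12}$ is now genuinely dominated by $2^{-2r}\nu^{7/6}$ under $\nu\le(p/8)^{12r}$. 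Your outline becomes correct once you insert this $s=4$ refinement.
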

\begin{proof}
Let $H$ have $\Gamma_H\le 3\binom{r}{3}$ unordered paths of length $2$ and $D_H\le 3\binom{r}{4}$ pairs of disjoint edges, which we denote by $K_2+K_2$. Let $C_{H,J} = \#\{H'\subseteq H\colon H'\simeq J\}$, i.e., the number of subgraphs of $H$ isomorphic to $J$. First we note that the number of subgraphs of $H$ with $s$ vertices is upper bounded by $\binom{r}{s}2^{\binom{s}{2}}$.

Choose variables $\mbf{x} = (x_v)_{v\in V(H)}$. Writing $W_G = p + f$ and expanding, we find
\[t_H(W_G) = \mb{E}_{\mbf{x}}\prod_{uv\in E(H)}(p + f(x_u,x_v)) = \sum_Jp^{e(H)-e(J)}C_{H,J}t_J(f),\]
where the sum is over isomorphism classes of graphs $J$ (with $e(J)$ edges) having no isolated vertices. Moving over the terms corresponding to the empty graph, single edge, and triangle, we obtain
\begin{align}
|t_H(W_G) &- p^{e(H)} - p^{e(H)-1}e(H)t_{K_2}(f) - p^{e(H)-3}\Delta_Ht_{K_3}(f)|\label{eq:density-discrepancy}\\
&\le p^{e(H)-2}(\Gamma_H|t_{K_{1,2}}(f)|+D_H|t_{K_2+K_2}(f)|)+{\sideset{}{^\ast}\sum_J} p^{e(H)-e(J)}C_{H,J}|t_J(f)|.\notag
\end{align}
The starred sum is over $J\notin\{K_0,K_2,K_3,K_{1,2},K_2+K_2\}$ with at most $r$ vertices and no isolated vertices. Note that all such $J$ satisfy $|V(J)|\ge 4$.

For $s = |V(J)|\ge 5$, we have by \cref{prop:general-global-bound,prop:graph-k2b} along with the defining property of $\nu$ that
\[|t_J(f)|\le|t_{K_{2,2\lceil s/2\rceil}}(f)|^{1/4}\le(2\nu^{2\lceil s/2\rceil} + 2n^{-2/3})^{1/4}\le2^{1/2}\nu^{\lceil s/2\rceil/2}\le 4\nu\cdot\nu^{s/12},\]
using $n\ge\nu^{-2r}$ and $5\le s\le r$ (we implicitly use $r\ge 5$, but since the term only exists in this case it is fine). For $s = |V(J)| = 4$, either the maximum degree of $J$ is $3$ or $J\in\{K_{2,2},P_3,K_2+K_2\}$ (here $K_2+K_2$ is the disjoint union of two edges). In the first case, \cref{prop:local-bound,prop:graph-k2b} give
\[|t_J(f)|\le|t_{K_{2,3}}(f)|^{1/2}\le(2\nu^3 + 2n^{-2/3})^{1/2}\le 2\nu^{3/2}\le 4\nu\cdot\nu^{s/12}.\]
If $J = K_{2,2}$, the conditions of \cref{prop:bipartite-global-bound} are satisfied and we find
\[|t_J(f)|\le|t_{K_{2,4}}(W)|\le 2\nu^2 + 2n^{-2/3}\le 4\nu^2\le 4\nu\cdot\nu^{s/12}.\]
If $J = P_3$, we find
\begin{align*}
|t_{P_3}(f)| &= |\mb{E}_{x,y}f_{x,y}f_x|\le|\mb{E}_{x,y}f_{x,y}^2|^{1/2}|\mb{E}_{x,y}f_x^2|^{1/2} = |t_{K_{2,2}}(f)|^{1/2}|t_{K_{1,2}}(f)|^{1/2}\\
&\le (2\nu^2+2n^{-2/3})^{1/2}(2\nu+2n^{-2/3})^{1/2}\le 4\nu\cdot\nu^{s/12},
\end{align*}
using Cauchy--Schwarz and similar arguments. Finally, $K_2+K_2$ and $K_{1,2}$ can be bounded via
\[|t_{K_2+K_2}(f)| = |\mb{E}_{x,y}f_xf_y|\le\mu^2,\qquad |t_{K_{1,2}}(f)| = |\mb{E}_{x}f_x^2|\le\mu^2.\]

Overall, we deduce using $e(J)\le\binom{s}{2}$ if $|V(J)| = s$ as well as the upper bound on the number of subgraphs of size $s$ that
\begin{align*}
\frac{\cref{eq:density-discrepancy}}{p^{e(H)}}-3\binom{r}{3}p^{-2}\mu^2-3\binom{r}{4}p^{-2}\mu^2&\le\nu\sum_{s=4}^r4\binom{r}{s}(2/p)^{\binom{s}{2}}\nu^{s/12}\le\nu\sum_{s=4}^r2^{rs/2}(2/p)^{s(s-1)/2}\nu^{s/12}\\
&\le\nu\sum_{s=4}^r2^{-(r+1)s/2}\nu^{s/24}\le p^{e(H)}\nu^{7/6}2^{-2r}
\end{align*}
as the condition on $\nu$ gives $\nu\le(p/8)^{12r}$. We deduce
\[\bigg|\frac{t_H(W_G)}{p^{e(H)}} - 1 - p^{-1}e(H)t_{K_2}(f) - p^{-3}\Delta_Ht_{K_3}(f)\bigg|\le2^{-2r}\nu^{7/6} + 3\binom{r+1}{4}p^{-2}\mu^2.\qedhere\]
\end{proof}
We note that the constants in this result are treated very cavalierly, but even so are still reasonable. This allows control of graph counts of size $r$ so long as our ``codegree control'' $\nu_{p,G}$ is of inverse exponential order in $r$, and our ``degree control'' $\mu_{p,G}$ is of inverse polynomial order or better.

\subsection{Optimality of subgraph size}\label{sub:optimality}
We show that requiring $\nu$ to be inverse exponential size in $r$ is in fact necessary, demonstrating the optimality of \cref{thm:effective-distance}.

Let $p = 1/2$. Choose some $m\ge 1$ and let $W\colon [0,1]^2\to[0,1]$ be defined by
\[W(x,y) = \frac{1 + \mbm{1}_{\lfloor mx\rfloor=\lfloor my\rfloor}}{2}.\]
Now choose $n$ much larger than $m$ and sample a random $W$-random graph $G$. Explicitly, for each $i\in[n]$ we sample $x_i\sim\on{Unif}[0,1]$ independently and then let $V(G) = [n]$, including edge $ij$ independently with probability $W(x_i,x_j)$.

We see that
\[\mu_{1/2,G} = \Theta(m^{-1}),\qquad\nu_{1/2,G} = \Theta(m^{-1})\]
with high probability if $n$ is sufficiently large in terms of $m$, e.g. by multiple applications of Chernoff. Furthermore, by the standard theory of $W$-random graphons, we have
\begin{equation}\label{eq:construction-convergence}
t_{K_r}(W_G)\to t_{K_r}(W) = 2^{-\binom{r}{2}}\sum_J2^{e(J)}t_J(W-1/2)\#\{H'\subseteq H\colon H'\simeq J\},
\end{equation}
as $n\to\infty$, summing over isomorphism classes of graphs $J$ with at most $r$ vertices and no isolated vertices.

But $f_{1/2,G} = W-1/2$ is a block graphon with $m$ square blocks of dimensions $1/m$ along the diagonal. Therefore we have
\[t_J(W-1/2) = 2^{-e(J)}m^{1-v(J)}\]
if $J$ is connected and has $v(J)$ vertices. We therefore see that the contribution to the right hand side of \cref{eq:construction-convergence} from connected graphs $J$ with $r$ vertices is at least $2^{-\binom{r}{2}}\cdot 2^{\binom{r-1}{2}}m^{1-r}$, since there are at least $2^{\binom{r-1}{2}}$ connected subgraphs of $K_r$ with $r$ vertices. Thus
\[2^{\binom{r}{2}}t_{K_r}(W_G) - 1 - 2\binom{r}{2}t_{K_2}(f_{1/2,G}) - 8\binom{r}{3}t_{K_3}(f_{1/2,G})\ge 2^{\binom{r-1}{2}}m^{1-r}\]
for $n$ sufficiently large. For $m\le 2^{r/4}$, this error is growing (and in particular outstrips the ``lower order'' terms corresponding to $K_2,K_3$, etc.), which is in direct contradiction to the quality of bound required by a result such as \cref{thm:effective-distance}. In particular, we obtain graphs where $\mu_{p,G}$ and $\nu_{p,G}$ are both exponentially decaying in the subgraph size $r$ but an estimate of the quality of \cref{thm:effective-distance} does not hold.

\section{Ramsey numbers}\label{sec:ramsey}
Now that we have established effective quasirandomness bounds, our approach to bounding the Ramsey numbers follows a framework developed by \cite{Tho88,Con09}. We compute $K_{r-1}$- and $K_r$-densities effectively, and show a conflict if $R(k+1,\ell+1)/\binom{k+\ell}{k}$ is not ``decreasing'' at some rate.

\subsection{The structure of Ramsey graphs}\label{sub:ramsey-graphs}
Let $\alpha(k,\ell)$ be a symmetric function taking positive values which we will choose later. For any such function, let
\[\alpha^\ast(k,\ell) = \frac{\lfloor\alpha(k,\ell)\binom{k+\ell}{k}\rfloor}{\binom{k+\ell}{k}}.\]
\begin{definition}\label{def:alpha-smooth}
We say a symmetric function $\alpha$ is $(\beta,\gamma)$\emph{-smooth for }$(k,\ell,r)$ if
\begin{align}
\begin{split}\label{eq:smooth}
&R(k+1-m,\ell+1)\le\alpha(k-m,\ell)\binom{k+\ell-m}{\ell},\qquad\frac{\alpha(k-m,\ell)}{\alpha^\ast(k,\ell)}\le 1+m\beta,\\
&R(k+1,\ell+1-m)\le\alpha(k,\ell-m)\binom{k+\ell-m}{k},\qquad\frac{\alpha(k,\ell-m)}{\alpha^\ast(k,\ell)}\le 1+m\gamma
\end{split}
\end{align}
hold for $m\in\{1,2,r-1\}$.
\end{definition}
Our goal is to show that $R(k+1,\ell+1)\le\alpha(k,\ell)\binom{k+\ell}{k}$ by induction. First we note that a graph on $\alpha^\ast(k,\ell)\binom{k+\ell}{k}$ vertices containing no $K_{k+1}$ or $\ol{K}_{\ell+1}$ has degrees and codegrees ``close to random''.
\begin{lemma}[From {\cite[Lemma~3.1]{Con09}}]\label{lem:degree-regularity}
Suppose $\alpha$ is $(\beta,\gamma)$-smooth for $(k,\ell,r)$ and there is a graph $G$ on $n=\alpha^\ast(k,\ell)\binom{k+\ell}{k}$ vertices with no $K_{k+1}$ or $\ol{K}_{\ell+1}$. Let $p = k/(k+\ell)$. Then
\[-(1-p)\gamma\le\mb{E}_yf_{p,G}(x,y)\le p\beta,\qquad\nu_{p,G}\le 2\max(p,1-p)(p\beta+(1-p)\gamma)+n^{-1}.\]
\end{lemma}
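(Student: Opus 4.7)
The plan is to translate the Ramsey hypothesis into concrete degree and codegree bounds on $G$, then substitute the smoothness inequalities. First, I would extract the standard consequences of the assumption that $G$ contains no $K_{k+1}$ or $\ol{K}_{\ell+1}$: for any vertex $x$, the induced subgraph on its neighborhood $N(x)$ has no $K_k$ (else adjoining $x$ yields $K_{k+1}$) or $\ol{K}_{\ell+1}$, so $\deg_G(x) \le R(k,\ell+1) - 1$; symmetrically, applied to the non-neighborhood, $n - 1 - \deg_G(x) \le R(k+1,\ell) - 1$. For distinct $x, y$, the common neighborhood $N(x) \cap N(y)$ contains no $K_{k-1}$ (else adjoining $x, y$ gives $K_{k+1}$) and no $\ol{K}_{\ell+1}$, so $|N(x) \cap N(y)| \le R(k-1,\ell+1) - 1$.

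Next, applying the $m = 1$ case of \cref{def:alpha-smooth} together with the identities $\binom{k+\ell-1}{\ell}/\binom{k+\ell}{k} = p$ and $\binom{k+\ell-1}{k}/\binom{k+\ell}{k} = 1 - p$ turns these into the density bounds $\deg_G(x)/n \le (1+\beta)p$ and $\deg_G(x)/n \ge 1 - (1+\gamma)(1-p) = p - \gamma(1-p)$. Since $\mb{E}_y f_{p,G}(x,y) = \deg_G(x)/n - p$ (noting $W_G(x,x) = 0$), this yields the first pair of inequalities immediately.

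For the codegree bound, I would expand
\[\mb{E}_z f_{p,G}(x,z)f_{p,G}(z,y) = \frac{|N(x)\cap N(y)|}{n} - p\cdot\frac{\deg_G(x) + \deg_G(y)}{n} + p^2,\]
and use the $m = 2$ case of smoothness together with $\binom{k+\ell-2}{k-2}/\binom{k+\ell}{k} = k(k-1)/[(k+\ell)(k+\ell-1)] \le p^2$ to conclude $|N(x)\cap N(y)|/n \le (1+2\beta)p^2$. Combined with the lower bound $\deg_G(x)/n, \deg_G(y)/n \ge p - \gamma(1-p)$ from the previous step, this gives
\[\mb{E}_z f_{p,G}(x,z)f_{p,G}(z,y) \le (1+2\beta)p^2 - 2p(p - \gamma(1-p)) + p^2 = 2p\bigl(p\beta + (1-p)\gamma\bigr),\]
which is at most $2\max(p,1-p)(p\beta + (1-p)\gamma)$; the $n^{-1}$ slack in the statement absorbs small integer-rounding errors in the Ramsey inequalities (the $-1$ in $R(k-1,\ell+1)-1$ and the gap between $\alpha$ and $\alpha^\ast$).

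The argument requires no conceptually hard step beyond the Erd\H{o}s--Szekeres-style pigeonhole on cliques and independent sets. The main hazard is bookkeeping: one must carefully track the sign of each contribution to the codegree expansion so that the $\beta p^2$ term (from the codegree slack) and the $p(1-p)\gamma$ term (from the degree slack) combine with the same sign, and one must check that each binomial ratio collapses cleanly to a power of $p$ or $1-p$, with $\binom{k+\ell-2}{k-2}/\binom{k+\ell}{k} \le p^2$ being the key inequality.
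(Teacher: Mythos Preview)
Your treatment of the degree inequalities is correct, and the overall strategy matches what is needed (the paper itself does not reprove this lemma, deferring to \cite{Con09}). However, there is a genuine gap in the codegree step.

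You assert that for all distinct $x,y$ the common neighborhood $N(x)\cap N(y)$ avoids $K_{k-1}$ because ``adjoining $x,y$ gives $K_{k+1}$''. This inference requires the edge $xy$ to be present: if $x\not\sim y$, a $K_{k-1}$ inside $N(x)\cap N(y)$ together with $x,y$ only produces a $K_{k+1}$ minus one edge, so the bound $|N(x)\cap N(y)|\le R(k-1,\ell+1)-1$ is not available. Your computation therefore only establishes
\[
\mb{E}_z f(x,z)f(z,y)\le 2p\bigl(p\beta+(1-p)\gamma\bigr)
\]
for \emph{adjacent} pairs. For non-adjacent $x,y$ one must instead bound the common non-neighborhood: an $\ol{K}_{\ell-1}$ in $\ol{N}(x)\cap\ol{N}(y)$ together with $x,y$ does yield $\ol{K}_{\ell+1}$ (since $x\not\sim y$), giving $|\ol{N}(x)\cap\ol{N}(y)|\le R(k+1,\ell-1)-1$. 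Expanding $\mb{E}_z f(x,z)f(z,y)$ in terms of $1-W_G$ rather than $W_G$, and using the $m=2$ smoothness on the $\gamma$ side together with the upper degree bound $\deg(x)\le R(k,\ell+1)-1$, one obtains
\[
\mb{E}_z f(x,z)f(z,y)\le 2(1-p)\bigl(p\beta+(1-p)\gamma\bigr)+(2p-1)/n.
\]
Taking the worse of the two cases is exactly what produces the factor $2\max(p,1-p)$ in the statement, and the $n^{-1}$ absorbs the $(2p-1)/n$ coming from the two diagonal terms $z\in\{x,y\}$ in the complementary expansion---not merely ``integer rounding'' as you suggest. (One also uses that the already-proved degree bounds force $p\beta+(1-p)\gamma\ge 0$, so that replacing $2p$ or $2(1-p)$ by $2\max(p,1-p)$ is legitimate.)
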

\begin{remark}
We also find $\mu_{p,G}\le\max(p\beta,(1-p)\gamma)$. To deduce this from \cite[Lemma~3.1]{Con09}, one must check the different cases of the signs of $\beta,\gamma$; it does in fact follow that, for instance, we do not have $\beta,\gamma < 0$. Also, under these hypotheses, by symmetry of $\alpha$, switching $k,\ell$ shows that $\alpha$ is $(\gamma,\beta)$-smooth for $(\ell,k,r)$. Applying \cref{lem:degree-regularity} to $\ol{G}$ and $\ell/(k+\ell)$ shows that
\[\mu_{1-p,\ol{G}}\le\max(p\beta,(1-p)\gamma),\qquad\nu_{1-p,\ol{G}}\le 2\max(p,1-p)(p\beta+(1-p)\gamma)+n^{-1}.\]
\end{remark}
Using these estimates and our effective quasirandomness estimates from \cref{sub:graph-effective}, we are ready to establish an inductive step. As in \cite{Con09}, we compute statistics for $K_{r-1}$- and $K_r$-densities, and show a violation if $\beta,\gamma$ are not too large and $r$ is small with respect to $k$.
\begin{proposition}\label{prop:inductive-step}
Given $\varepsilon > 0$, there is $c_\varepsilon > 0$ so that the following holds. Suppose $\alpha$ is symmetric and $(\beta,\gamma)$-smooth for $(k,\ell,r)$, and further suppose $\ell/k\in[\varepsilon,1]$. Assume that $|\beta| + |\gamma|\le r(\log k)^2/k$ and $\alpha(k,\ell)\ge\exp(-r(\ell/k)\log k)$. If $r\le c_\varepsilon\log k$ and $k\beta+\ell\gamma\le (r-3)\ell/(2k)$, then
\[R(k+1,\ell+1)\le\alpha(k,\ell)\binom{k+\ell}{k}.\]
\end{proposition}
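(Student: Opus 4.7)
The plan is proof by contradiction. Suppose $G$ is a graph on $n = \alpha^\ast(k,\ell)\binom{k+\ell}{k}$ vertices avoiding $K_{k+1}$ and $\overline{K}_{\ell+1}$, and set $p = k/(k+\ell)$. By \cref{lem:degree-regularity} and its Remark, all four quantities $\mu_{p,G}, \mu_{1-p,\overline{G}}, \nu_{p,G}, \nu_{1-p,\overline{G}}$ are $O_\varepsilon(r(\log k)^2/k)$ under the hypothesis on $|\beta|+|\gamma|$, using that $\ell/k \in [\varepsilon,1]$ forces $p,1-p = \Theta_\varepsilon(1)$. In particular $\log(\nu^{-1}) \gtrsim_\varepsilon \log k$, so for $c_\varepsilon$ chosen small enough the hypothesis $r \le \log(\nu^{-1})/(12\log(8/p))$ of \cref{thm:effective-distance} holds; meanwhile $\nu^{-2r} \le \exp(O_\varepsilon((\log k)^2))$ is dwarfed by $n \ge \exp(-r(\ell/k)\log k)\binom{k+\ell}{k} \ge \exp(\Omega_\varepsilon(k))$, where the lower bound uses the hypothesis on $\alpha(k,\ell)$ and Stirling.

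Apply \cref{thm:effective-distance} to $(G,p)$ and $(\overline{G},1-p)$ with $H \in \{K_{r-1}, K_r\}$, noting $f_{1-p,\overline{G}} = -f_{p,G}$ off the diagonal (so $t_{K_2}$ is preserved while $t_{K_3}$ flips sign under complementation). The Ramsey hypothesis supplies the double-counting inequalities
\[
\frac{N_r^{\mathrm{ord}}(G)}{N_{r-1}^{\mathrm{ord}}(G)} \le \alpha(k-r+1,\ell)\binom{k+\ell-r+1}{\ell}, \qquad \frac{N_r^{\mathrm{ord}}(\overline{G})}{N_{r-1}^{\mathrm{ord}}(\overline{G})} \le \alpha(k,\ell-r+1)\binom{k+\ell-r+1}{k},
\]
since any $K_{r-1}$ in $G$ has common neighborhood of size at most $R(k-r+2,\ell+1)-1$ (else we would find $K_{k+1}$ or $\overline{K}_{\ell+1}$), and similarly in $\overline{G}$. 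Using the identity $N_s^{\mathrm{ord}} = n^s t_{K_s}(W)$, the Stirling estimate $\binom{k+\ell-r+1}{\ell}/\binom{k+\ell}{k} = p^{r-1}\exp(-\binom{r-1}{2}\ell/(k(k+\ell)) + O(r^3/k^2))$, and the $(r-1)$-case of smoothness, these reduce (on taking logarithms) to the pair of linear constraints (with $f := f_{p,G}$)
\begin{align*}
(r-1)p^{-1}t_{K_2}(f) + \tbinom{r-1}{2}p^{-3}t_{K_3}(f) &\le (r-1)\beta - \tbinom{r-1}{2}\tfrac{\ell}{k(k+\ell)} + O(\mathrm{error}),\\
(r-1)(1-p)^{-1}t_{K_2}(f) - \tbinom{r-1}{2}(1-p)^{-3}t_{K_3}(f) &\le (r-1)\gamma - \tbinom{r-1}{2}\tfrac{k}{\ell(k+\ell)} + O(\mathrm{error}).
\end{align*}

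The last step combines these two inequalities to contradict $k\beta+\ell\gamma \le (r-3)\ell/(2k)$. Since $|t_{K_3}(f)| = O(\nu)$ via \cref{prop:general-global-bound,prop:graph-k2b}, the $t_{K_3}$ contributions are absorbable into error, effectively reducing the system to two upper bounds on $t_{K_2}(f)$. Feeding in the lower bound $t_{K_2}(f) \ge -(1-p)\gamma$ (and, where convenient, the upper bound $t_{K_2}(f) \le p\beta$) from \cref{lem:degree-regularity} and taking an appropriate weighted combination—designed to cancel $t_{K_3}$ while symmetrizing the roles of $k$ and $\ell$—yields an inequality of the form $k\beta+\ell\gamma \gtrsim (r-3)\ell/(2k) + (\mathrm{gap})$, contradicting the hypothesis. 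The main obstacle is in this algebraic bookkeeping: each error source—the $2^{-2r}\nu^{7/6}$ and $r^4p^{-2}\mu^2$ terms from \cref{thm:effective-distance}, the $O(r^3/k^2)$ Stirling error in the binomial ratio, and the $t_{K_3}(f)$ remainder—must be verified to be strictly smaller than the target gap $\ell/k$, which is exactly the role of the hypotheses $r \le c_\varepsilon\log k$, $|\beta|+|\gamma| \le r(\log k)^2/k$, and $\alpha(k,\ell) \ge \exp(-r(\ell/k)\log k)$.
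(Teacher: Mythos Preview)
Your overall architecture matches the paper's proof: contradiction, \cref{lem:degree-regularity} for $\mu,\nu$ bounds, \cref{thm:effective-distance} applied to $K_{r-1}$ and $K_r$ in both $G$ and $\ol G$, the clique-extension inequality, and a weighted combination to extract $k\beta+\ell\gamma>(r-3)\ell/(2k)$. Two points, however, are genuinely wrong and would derail the computation.

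\textbf{Sign under complementation.} Since $e(K_2)=1$ is odd, $t_{K_2}(f_{1-p,\ol G}) = -t_{K_2}(f_{p,G}) + O(n^{-1})$; it is \emph{not} preserved. Both $t_{K_2}$ and $t_{K_3}$ flip sign. With your stated sign, the second displayed inequality has $+(1-p)^{-1}t_{K_2}(f)$ on the left where it should have $-(1-p)^{-1}t_{K_2}(f)$. After the $k^3{:}\ell^3$ combination this produces a $(k^2+\ell^2)(k+\ell)t_{K_2}(f)$ term instead of $(k^2-\ell^2)(k+\ell)t_{K_2}(f)$, and plugging in $t_{K_2}(f)\ge -(1-p)\gamma$ then leaves an uncontrolled $2\ell^3\gamma/k^2$ residue of size up to $\Theta_\varepsilon(r(\log k)^2)$, swamping the target $(r-2)\ell/(2k)=\Theta_\varepsilon(r)$.

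\textbf{The $t_{K_3}$ term is not absorbable.} Your bound $|t_{K_3}(f)|=O(\nu)=O(r/k)$ is correct, but the coefficient in the $\beta$-inequality is $\tfrac{r-2}{2}p^{-3}=\Theta_\varepsilon(r)$, so the contribution is $\Theta_\varepsilon(r^2/k)$---a full factor of $r$ larger than the main term $\tfrac{r-2}{2}\tfrac{1-p}{k}=\Theta_\varepsilon(r/k)$. It cannot be treated as error. The paper's weighting $k^3\cdot\cref{eq:beta}+\ell^3\cdot\cref{eq:gamma}$ is not merely convenient but forced: since $k^3p^{-3}=(k+\ell)^3=\ell^3(1-p)^{-3}$, the $t_{K_3}$ coefficients cancel \emph{exactly}, and no other mechanism removes them. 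You do mention cancellation in the next sentence, but the preceding claim that the system ``reduces to two upper bounds on $t_{K_2}(f)$'' is false and should be deleted.
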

\begin{proof}
We will ultimately choose $c_\varepsilon$ to be small enough based on various conditions. We will denote by $c_\varepsilon'$ by some positive constant that may depend on $\varepsilon$ and our choice of $c_\varepsilon$; it will potentially change line to line and can be made arbitrarily large by choosing $c_\varepsilon$ sufficiently small.

Suppose for the sake of contradiction that $R(k+1,\ell+1)>\alpha(k,\ell)\binom{k+\ell}{k}$. Then there is a graph $G$ on $n = \alpha^\ast(k,\ell)\binom{k+\ell}{k}$ vertices avoiding $K_{k+1}$ and $\ol{K}_{\ell+1}$. Let $p = k/(k+\ell)$.

By \cref{lem:degree-regularity}, we have
\[\mu_{p,G}\le\max(p\beta,(1-p)\gamma),\qquad\nu_{p,G}\le 2\max(p,1-p)(p\beta+(1-p)\gamma)+n^{-1} < \frac{r}{k+\ell},\]
using $n\ge\lfloor\exp(-r(\ell/k)\log k)\binom{k+\ell}{k}\rfloor > (k+\ell)/2$ (for $c_\varepsilon$ chosen appropriately).

Let $\nu = r/k$, and note that $r\le\log(\nu^{-1})/(12\log(8/\min(p,1-p)))$ for appropriate $c_\varepsilon$ as $p\in[1/2,1/(1+\varepsilon)]$. Let $f = f_{p,G}$. By \cref{thm:effective-distance} we have, letting $\mu = \mu_{p,G}$,
\[p^{-\binom{r-1}{2}}t_{K_{r-1}}(W_G) = 1 + p^{-1}\binom{r-1}{2}t_{K_2}(f) + p^{-3}\binom{r-1}{3}t_{K_3}(f) + O(2^{-2r}\nu^{7/6} + r^4p^{-2}\mu^2).\]
Furthermore, by $|\beta|+|\gamma|\le r(\log k)^2/k$ we have $\mu\le\max(p\beta,(1-p)\gamma)\le\nu(\log k)^2$. Using $\nu(\log k)^4 = O_\varepsilon(2^{-c_\varepsilon'r})$ and $r^{7/6}k^{-1/6} = O_\varepsilon(2^{-c_\varepsilon'r})$ for appropriate $c_\varepsilon'$ (from $r\le c_\varepsilon\log k$ and $\ell\ge\varepsilon k$), we obtain
\begin{equation}\label{eq:Kr-1}
p^{-\binom{r-1}{2}}t_{K_{r-1}}(W_G) = 1 + p^{-1}\binom{r-1}{2}t_{K_2}(f) + p^{-3}\binom{r-1}{3}t_{K_3}(f) + O_\varepsilon(2^{-c_\varepsilon'r}k^{-1})
\end{equation}
and similarly
\begin{equation}\label{eq:Kr}
p^{-\binom{r}{2}}t_{K_r}(W_G) = 1 + p^{-1}\binom{r}{2}t_{K_2}(f) + p^{-3}\binom{r}{3}t_{K_3}(f) + O_\varepsilon(2^{-c_\varepsilon'r}k^{-1})
\end{equation}
for appropriate $c_\varepsilon'$. Also, by taking $c_\varepsilon$ small enough, we can choose $c_\varepsilon'$ as large as we want. Furthermore, since $W_G(x,x) = 0$ we in fact have
\[t_{K_s}(W_G) = n^{-s}\#\{\text{clique }s\text{-tuples in }G\}\]
for each $s\ge 1$. Now note that every $(r-1)$-tuple forming a clique can extend to an $r$-clique in less than $R(k+1-(r-1),\ell+1)$ ways, else by applying the definition of the Ramsey number we find an independent set of size $\ell+1$ or a clique of size $k+1-(r-1)$ which is fully connected to a disjoint $(r-1)$-clique. We deduce
\begin{align*}
t_{K_r}(W_G) &= n^{-r}\#\{\text{clique }r\text{-tuples in }G\}\le n^{-r}R(k+2-r,\ell+1)\#\{\text{clique }(r-1)\text{-tuples in }G\}\\
&=\frac{R(k+2-r,\ell+1)}{n}t_{K_{r-1}}(W_G)\le\frac{\alpha(k-(r-1),\ell)}{\alpha^\ast(k,\ell)}\frac{\binom{k+\ell-(r-1)}{k-(r-1)}}{\binom{k+\ell}{k}}t_{K_{r-1}}(W_G)\\
&\le (1+(r-1)\beta)\prod_{i=0}^{r-2}\frac{k-i}{k+\ell-i}\cdot t_{K_{r-1}}(W_G)\\
&\le (1+(r-1)\beta)p^{r-1}\prod_{i=0}^{r-2}\exp(i/(k+\ell)+i^2/(k+\ell)^2-i/k)\cdot t_{K_{r-1}}(G)\\
&\le (1+(r-1)\beta)p^{r-1}\exp(-(r-1)(r-2)(1-p)/(2k)+r^3/(k+\ell)^2)t_{K_{r-1}}(G),
\end{align*}
using smoothness of $\alpha$ as well as $1/(1-x)\le\exp(x+x^2)$ for $x\in(0,1/2)$. Expanding along with \cref{eq:Kr-1,eq:Kr} and dividing by $p^{\binom{r}{2}}$, we find
\begin{align*}
1 + &p^{-1}\binom{r}{2}t_{K_2}(f) + p^{-3}\binom{r}{3}t_{K_3}(f) + O_\varepsilon(2^{-c_\varepsilon'r}k^{-1})\\
&\le (r-1)\beta - \binom{r-1}{2}\frac{1-p}{k} + 1 + p^{-1}\binom{r-1}{2}t_{K_2}(f) + p^{-3}\binom{r-1}{3}t_{K_3}(f) + O_\varepsilon(2^{-c_\varepsilon'r}k^{-1}),
\end{align*}
noting that expressions such as $(r-1)\beta\cdot O(r^2/k)$ can be absorbed into the error terms by $r\le c_\varepsilon\log k$. Here the $O$'s merely assert that there exist bounded quantities of the claimed form so that the above inequality is true. Subtracting over terms and dividing by $r-1$, we obtain
\begin{equation}\label{eq:beta}
\beta\ge\frac{r-2}{2}\cdot\frac{1-p}{k} + p^{-1}t_{K_2}(f) + p^{-3}\frac{r-2}{2}t_{K_3}(f) + O_\varepsilon(2^{-c_\varepsilon'r}k^{-1}).
\end{equation}

Now, by the remark following \cref{lem:degree-regularity}, we can apply the above arguments to $\ol{G}$ and $1-p$. Letting $\ol{f} = f_{1-p,\ol{G}}$, we obtain
\begin{equation}\label{eq:gamma}
\gamma\ge\frac{r-2}{2}\cdot\frac{p}{\ell}+(1-p)^{-1}t_{K_2}(\ol{f}) + (1-p)^{-3}\frac{r-2}{2}t_{K_3}(\ol{f}) + O_\varepsilon(2^{-c_\varepsilon'r}k^{-1}).
\end{equation}
Note that $f + \ol{f} = W_G + W_{\ol{G}} - 1 = -\mbm{1}_{x=y}$ (recall these functions are defined on the set $V(G)^2$). Thus we find
\[t_{K_2}(\ol{f}) = -t_{K_2}(f) + O(n^{-1}),\qquad t_{K_3}(\ol{f}) = -t_{K_3}(f) + O(n^{-1}).\]
This, combined with the inequality $k^3\cref{eq:beta}+\ell^3\cref{eq:gamma}$, yields
\[k^3\beta + \ell^3\gamma\ge\frac{r-2}{2}k\ell + (k+\ell)(k^2-\ell^2)t_{K_2}(f) + O_\varepsilon(2^{-c_\varepsilon'r}k^2),\]
using that $n$ is large by the lower bound on $\alpha$ (exponential in $k$ so doubly exponential in $r$) to absorb terms into the error term. From the first part of \cref{lem:degree-regularity}, we find $t_{K_2}(f)\ge -(1-p)\gamma$, and recall $k\ge\ell$. Using this and switching terms to the other side, dividing by $k^2$, and absorbing the error term (choosing $c_\varepsilon$ small enough so that $c_\varepsilon'$ is sufficiently large), we obtain the contradiction
\[k\beta+\ell\gamma > \frac{r-3}{2}\cdot\frac{\ell}{k}.\qedhere\]
\end{proof}

\subsection{Induction}\label{sub:induction}
Note that \cref{prop:inductive-step} provides a way of bootstrapping bounds on Ramsey numbers, as long as the (symmetric) function $\alpha$ is smooth with respect to parameters that are not too large. This means that iterating it will give some amount of improvement over the Erd\"{o}s-Szekeres bound \eqref{eq:erdos-szekeres}, and the rest is merely an exercise in extracting the behavior of some recurrence. For our purposes the induction scheme presented in \cite{Con09}, which essentially generalizes the one in \cite{Tho88}, will suffice.

The key point is that a function of the form $\alpha(x,y)=\exp(-\rho(y/x)\log(x+y))$ satisfies
\begin{equation}\label{eq:pde}
-x\frac{\partial}{\partial x}[\log\alpha(x,y)] - y\frac{\partial}{\partial y}[\log\alpha(x,y)] = \rho(y/x),
\end{equation}
which is the continuous analogue of the crucial smoothness condition in \cref{prop:inductive-step} (if say $\rho(x)\le (r-3)x/2$).
\begin{definition}
Let $\tau(x) = 6x^5 - 15x^4 + 10x^3$. For $r\ge 5$ and $\varepsilon\in(0,1/2)$ we define the function $\rho_{r,\varepsilon}\colon [0,+\infty)\to[0,+\infty)$ via
\[\rho_{r,\varepsilon}(x) = \begin{cases}0&\text{if }x\in[0,\varepsilon]\\(r-4)\tau((x-\varepsilon)/(1-\varepsilon))/4&\text{if }x\in[\varepsilon,1]\\\rho_{r,\varepsilon}(1/x)&\text{if }x\in[1,+\infty)\end{cases}\]
and the function $\phi$ via $\phi_{r,\varepsilon}(k,\ell) = \rho_{r,\varepsilon}(\ell/k)\log(k+\ell)$.
\end{definition}
We will use $\alpha(x,y) = C_{r,\varepsilon}\exp(-\phi_{r,\varepsilon}(x,y))$ for some appropriate $C_{r,\varepsilon}$ and value of $r$ to be chosen later. Notice that this choice of $\alpha$ is symmetric. We collect the following lemmas, which provide the necessary bounds for the discrete version of \cref{eq:pde}.
\begin{lemma}[{\cite[Lemma~5.1]{Con09}}]\label{lem:rho-bound}
For $r\ge 5$ and $\varepsilon\in(0,1/2)$, $\rho_{r,\varepsilon}$ is twice-differentiable and satisfies $\rho_{r,\varepsilon}(x)\in[0,(r-4)x/2]$ for $x\in[0,1]$ and $\snorm{\rho'}_\infty\le r$, $\snorm{\rho''}_\infty\le 10r$.
\end{lemma}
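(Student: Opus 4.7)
The plan is to verify each assertion by direct computation with the smoothstep $\tau(y) = 6y^5 - 15y^4 + 10y^3$, using the identities $\tau'(y) = 30y^2(1-y)^2$ and $\tau''(y) = 60y(1-y)(1-2y)$. In particular, $\tau(0) = 0$, $\tau(1) = 1$, and $\tau'(0) = \tau'(1) = \tau''(0) = \tau''(1) = 0$; this is the reason one uses the smoothstep.

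For $C^2$-smoothness, I check the match at the three transition points $x \in \{\varepsilon, 1, 1/\varepsilon\}$. At $x = \varepsilon$, the left piece is identically zero while the right piece has first and second derivatives proportional to $\tau'(0)$ and $\tau''(0)$, both zero. At $x = 1$, the left piece has derivatives proportional to $\tau'(1)$ and $\tau''(1)$, both zero; differentiating $\rho_{r,\varepsilon}(1/x)$ yields $-\rho_{r,\varepsilon}'(1/x)/x^2$ and $\rho_{r,\varepsilon}''(1/x)/x^4 + 2\rho_{r,\varepsilon}'(1/x)/x^3$, both of which evaluate to zero at $x=1$ using the left-side limits. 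The match at $x = 1/\varepsilon$ is the reflection of the match at $\varepsilon$ under the smooth map $x \mapsto 1/x$.

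For the range bound, on $[0,\varepsilon]$ the function vanishes, and on $[\varepsilon,1]$ I must show $\tau(y) \le 2x$ with $y = (x-\varepsilon)/(1-\varepsilon)$. If $x \ge 1/2$, this follows from $\tau(y) \le 1 \le 2x$. If $x \in [\varepsilon, 1/2]$, then $y \le 1/2$, and the factorization $\tau(y) - y = 2y(y-1)(y-1/2)(3y^2 - 3y - 1)$ combined with $3y^2 - 3y - 1 < 0$ on $[0,1]$ gives $\tau(y) \le y$; then $\tau(y) \le y \le 2\varepsilon + 2(1-\varepsilon)y = 2x$ since $2(1-\varepsilon) > 1$.

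For the derivative bounds, direct extremization yields $\|\tau'\|_\infty = 15/8$ (attained at $y = 1/2$) and $\|\tau''\|_\infty = 10\sqrt{3}/3 < 6$ (attained at $y = (3\pm\sqrt{3})/6$). The chain rule together with $1-\varepsilon > 1/2$ then gives $\|\rho_{r,\varepsilon}'\|_{[\varepsilon, 1]} \le 15(r-4)/16 < r$ and $\|\rho_{r,\varepsilon}''\|_{[\varepsilon, 1]} \le 10\sqrt{3}(r-4)/3 < 10r$. On $[1, \infty)$, differentiating $\rho_{r,\varepsilon}(x) = \rho_{r,\varepsilon}(1/x)$ and using $x \ge 1$ preserves these bounds up to small constants (one needs $10\sqrt{3}/3 + 15/8 < 10$ for the second-derivative bound). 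The most delicate point is the upper bound $\rho_{r,\varepsilon}(x) \le (r-4)x/2$ for $x$ close to $\varepsilon$, where the naive estimate $\tau \le 1$ is insufficient and one must exploit the linear bound $\tau(y) \le y$ on $[0,1/2]$; the rest is routine calculus.
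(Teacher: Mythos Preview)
The paper does not supply its own proof of this lemma; it is quoted directly from Conlon \cite[Lemma~5.1]{Con09}, so there is no argument in the present paper to compare against. Your direct verification via the smoothstep identities $\tau'(y)=30y^2(1-y)^2$ and $\tau''(y)=60y(1-y)(1-2y)$ is correct: the $C^2$-matching at $x\in\{\varepsilon,1,1/\varepsilon\}$, the factorization $\tau(y)-y=2y(y-1)(y-1/2)(3y^2-3y-1)$ giving $\tau(y)\le y$ on $[0,1/2]$ (whence $\tau(y)\le 2x$), and the numerical bounds $\snorm{\tau'}_\infty=15/8$, $\snorm{\tau''}_\infty=10\sqrt{3}/3$ combined with $1-\varepsilon>1/2$ all check out. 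The only place requiring any care is exactly the one you flag, namely $\rho_{r,\varepsilon}(x)\le(r-4)x/2$ near $x=\varepsilon$, and your treatment there is sound.
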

\begin{lemma}[{\cite[Lemma~5.2]{Con09}}]\label{lem:alpha-smooth}
If $k,\ell\ge 200r^4/\varepsilon^2$ and if
\[b = \frac{4\rho_{r,\varepsilon}(\ell/k)+\varepsilon}{4(k+\ell)} - \frac{\ell\log(k+\ell)}{k^2}\rho_{r,\varepsilon}'(\ell/k),\qquad c = \frac{4\rho_{r,\varepsilon}(\ell/k)+\varepsilon}{4(k+\ell)} + \frac{\log(k+\ell)}{k}\rho_{r,\varepsilon}'(\ell/k),\]
then
\[\exp(\phi_{r,\varepsilon}(k,\ell)-\phi_{r,\varepsilon}(k-m,\ell))\le 1+mb,\qquad\exp(\phi_{r,\varepsilon}(k,\ell)-\phi_{r,\varepsilon}(k,\ell-m))\le 1+mc\]
for $m\in\{1,2,r-1\}$.
\end{lemma}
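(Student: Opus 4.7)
The plan is to recognize $b$ and $c$ as the first-order partial derivatives of $\phi_{r,\varepsilon}$ at $(k,\ell)$ in the $k$- and $\ell$-directions, together with an $\varepsilon/(4(k+\ell))$ slack designed to absorb higher-order corrections. Direct computation yields
\begin{align*}
\frac{\partial \phi_{r,\varepsilon}}{\partial k}(k,\ell) &= -\frac{\ell\log(k+\ell)}{k^2}\rho_{r,\varepsilon}'(\ell/k) + \frac{\rho_{r,\varepsilon}(\ell/k)}{k+\ell},\\
\frac{\partial \phi_{r,\varepsilon}}{\partial \ell}(k,\ell) &= \frac{\log(k+\ell)}{k}\rho_{r,\varepsilon}'(\ell/k) + \frac{\rho_{r,\varepsilon}(\ell/k)}{k+\ell},
\end{align*}
so that $b = \partial_k\phi_{r,\varepsilon}(k,\ell) + \varepsilon/(4(k+\ell))$ and $c = \partial_\ell\phi_{r,\varepsilon}(k,\ell) + \varepsilon/(4(k+\ell))$.

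Next, by Taylor's theorem with Lagrange remainder, for each $m\in\{1,2,r-1\}$ there exist $\xi_m\in[k-m,k]$ and $\eta_m\in[\ell-m,\ell]$ with
\[
\phi_{r,\varepsilon}(k,\ell)-\phi_{r,\varepsilon}(k-m,\ell) = m\,\partial_k\phi_{r,\varepsilon}(k,\ell) - \frac{m^2}{2}\partial_k^2\phi_{r,\varepsilon}(\xi_m,\ell),
\]
and similarly in the $\ell$-direction. I would then compute $\partial_k^2\phi_{r,\varepsilon}$ and $\partial_\ell^2\phi_{r,\varepsilon}$ as explicit sums of terms involving $\rho,\rho',\rho''$ divided by powers of $k$, $\ell$, and $k+\ell$; plugging in $\rho,\rho'\le r$ and $|\rho''|\le 10r$ from \cref{lem:rho-bound} together with $\ell\le k$ then yields the uniform bound $O(r\log(k+\ell)/k^2)$ on both second derivatives over the relevant intervals.

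Finally, setting $X=\phi_{r,\varepsilon}(k,\ell)-\phi_{r,\varepsilon}(k-m,\ell)$, the hypotheses $k,\ell\ge 200r^4/\varepsilon^2$ and $m\le r-1$ force $|X|\le 1$, so one may invoke $e^X\le 1+X+X^2$. This yields
\[
\exp(X) \le 1 + m\,\partial_k\phi_{r,\varepsilon}(k,\ell) - \frac{m^2}{2}\partial_k^2\phi_{r,\varepsilon}(\xi_m,\ell) + X^2,
\]
and the desired bound $\exp(X)\le 1+mb$ reduces to showing that the combined error $-\frac{m^2}{2}\partial_k^2\phi_{r,\varepsilon}(\xi_m,\ell)+X^2$ is at most $m\varepsilon/(4(k+\ell))$. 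The second inequality in the statement is handled by the same argument with $k$ and $\ell$ exchanged. The main obstacle is this final verification: one must track $O(r^3\log^2(k+\ell)/k^2)$-type contributions arising from $X^2$ and the Taylor remainder, and show they fit under the $m\varepsilon/(4(k+\ell))$ budget using $m\le r-1$, $\ell/k\in[\varepsilon,1]$, and $k,\ell\ge 200r^4/\varepsilon^2$.
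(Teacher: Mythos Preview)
The paper does not supply its own proof of this lemma; it is quoted verbatim from Conlon \cite[Lemma~5.2]{Con09}. Your outline---identifying $b,c$ as $\partial_k\phi_{r,\varepsilon}(k,\ell)+\varepsilon/(4(k+\ell))$ and $\partial_\ell\phi_{r,\varepsilon}(k,\ell)+\varepsilon/(4(k+\ell))$, Taylor-expanding $\phi_{r,\varepsilon}$ to second order, and absorbing the remainder together with the $e^X\le 1+X+X^2$ error into the $\varepsilon/(4(k+\ell))$ slack---is precisely the intended (and Conlon's) argument.

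Two small cautions. First, the hypothesis $\ell/k\in[\varepsilon,1]$ that you invoke at the end is not part of the lemma as stated. This is harmless: by the symmetry $\rho_{r,\varepsilon}(x)=\rho_{r,\varepsilon}(1/x)$ one has $\phi_{r,\varepsilon}(k,\ell)=\phi_{r,\varepsilon}(\ell,k)$, and a short computation gives $b(k,\ell)=c(\ell,k)$, so one may assume $\ell\le k$ without loss of generality; the sub-case $\ell/k<\varepsilon$ is then immediate because $\rho_{r,\varepsilon}$ and $\rho_{r,\varepsilon}'$ vanish there. You should say this explicitly rather than silently import the extra hypothesis. Second, the ``main obstacle'' you flag is real: the $X^2$ contribution is of order $m^2(\partial_k\phi)^2\sim r^4\log^2(k+\ell)/k^2$, and showing this fits under $m\varepsilon/(4(k+\ell))$ using only $k,\ell\ge 200r^4/\varepsilon^2$ requires genuinely tracking constants rather than just writing $O(\cdot)$; the $\log^2$ factor means the inequality is not automatic, and one has to use that $\log^2(2k)/k$ is decreasing together with an explicit evaluation at the endpoint $k=200r^4/\varepsilon^2$. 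Your write-up stops short of this, so as a proof it is incomplete, but the strategy is correct.
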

Now we are ready to establish a bound for Ramsey numbers depending on the parameter $r$ as well as $\varepsilon$ (which controls what regime of $\ell/k$ this bound is nontrivial for).
\begin{theorem}\label{thm:ramsey-r}
Let $r\ge 5$ and $\varepsilon\in(0,1/2)$. Then there is $C_\varepsilon > 0$ with
\begin{equation}\label{eq:ramsey-r}
R(k+1,\ell+1)\le 2^{C_\varepsilon r^2}\exp(-\phi_{r,\varepsilon}(k,\ell))\binom{k+\ell}{k}.
\end{equation}
\end{theorem}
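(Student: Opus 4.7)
The plan is to induct on $k+\ell$ with $\alpha(k,\ell) = 2^{C_\varepsilon r^2}\exp(-\phi_{r,\varepsilon}(k,\ell))$, which is symmetric since $\rho_{r,\varepsilon}$ satisfies $\rho_{r,\varepsilon}(x) = \rho_{r,\varepsilon}(1/x)$. By symmetry of $R$ and $\alpha$, we may assume $\ell\le k$ throughout. In the inductive step I would apply \cref{prop:inductive-step}, using \cref{lem:alpha-smooth} to verify smoothness and \cref{lem:rho-bound} to control $\rho_{r,\varepsilon}$ and its derivatives. The constant $C_\varepsilon$ will be chosen large relative to the constants $c_\varepsilon$ appearing in \cref{prop:inductive-step} and the thresholds from \cref{lem:alpha-smooth}.

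For the base cases I distinguish two regimes. First, if $\ell/k < \varepsilon$, then $\phi_{r,\varepsilon}(k,\ell)=0$ by definition, and the desired bound reduces to $R(k+1,\ell+1)\le 2^{C_\varepsilon r^2}\binom{k+\ell}{k}$, which is weaker than Erd\H{o}s--Szekeres \cref{eq:erdos-szekeres}. Second, if $\ell/k\in[\varepsilon,1]$ but $k$ is too small to run the inductive step---specifically $k\le\max(e^{r/c_\varepsilon},200r^4/\varepsilon^2)$, where the first threshold comes from the requirement $r\le c_\varepsilon\log k$ in \cref{prop:inductive-step} and the second from \cref{lem:alpha-smooth}---I observe that $\phi_{r,\varepsilon}(k,\ell)\le r\log(k+\ell)\le 2r\log k \le 2r^2/c_\varepsilon$, so $\exp(-\phi_{r,\varepsilon}(k,\ell))\ge e^{-2r^2/c_\varepsilon}$. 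Taking $C_\varepsilon\ge 4/c_\varepsilon$ makes $2^{C_\varepsilon r^2}\exp(-\phi_{r,\varepsilon})\ge 1$, so again Erd\H{o}s--Szekeres suffices.

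For the inductive step, suppose $\ell/k\in[\varepsilon,1]$, $\ell\le k$, and $k,\ell$ exceed the thresholds above. The inductive hypothesis gives $R(k+1-m,\ell+1)\le\alpha(k-m,\ell)\binom{k+\ell-m}{\ell}$ and symmetric for $(k,\ell-m)$, verifying the first line of \cref{eq:smooth}. For the second line, \cref{lem:alpha-smooth} gives $\alpha(k-m,\ell)/\alpha(k,\ell)\le 1+mb$ and $\alpha(k,\ell-m)/\alpha(k,\ell)\le 1+mc$; the passage from $\alpha(k,\ell)$ to $\alpha^\ast(k,\ell)$ introduces a multiplicative error of $1+O(1/(n\alpha(k,\ell)))=1+O(1/(k+\ell))$, which is absorbed into $\beta,\gamma$ harmlessly. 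Thus $\alpha$ is $(\beta,\gamma)$-smooth with $\beta=b+O(1/(k+\ell))$, $\gamma=c+O(1/(k+\ell))$. The remaining hypotheses of \cref{prop:inductive-step} are verified as follows: using $\snorm{\rho_{r,\varepsilon}}_\infty,\snorm{\rho_{r,\varepsilon}'}_\infty=O(r)$ from \cref{lem:rho-bound}, we get $|\beta|+|\gamma|=O(r\log k/k)\le r(\log k)^2/k$; the bound $\alpha(k,\ell)\ge\exp(-r(\ell/k)\log k)$ follows from $\rho_{r,\varepsilon}(\ell/k)\le(r-4)(\ell/k)/2$ and $\log(k+\ell)\le 2\log k$; and $r\le c_\varepsilon\log k$ is ensured by the base case threshold.

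The main obstacle is the key inequality $k\beta+\ell\gamma\le(r-3)\ell/(2k)$, which is exactly where the choice of $\rho_{r,\varepsilon}$ matters. Reading off $kb$ and $\ell c$ from \cref{lem:alpha-smooth}, the $\rho'$ terms cancel to give
\[kb+\ell c=\frac{4\rho_{r,\varepsilon}(\ell/k)+\varepsilon}{4}=\rho_{r,\varepsilon}(\ell/k)+\frac{\varepsilon}{4},\]
which is the discrete shadow of the PDE \cref{eq:pde}. Applying $\rho_{r,\varepsilon}(\ell/k)\le(r-4)\ell/(2k)$ from \cref{lem:rho-bound} and $\varepsilon/4\le\ell/(2k)$ (since $\ell/k\ge\varepsilon$), we obtain $kb+\ell c\le(r-3)\ell/(2k)$. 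After accounting for the $O(k/(k+\ell))+O(\ell/(k+\ell))=O(1)$ slack from the $\alpha^\ast$ correction, which is well within the margin provided by taking $c_\varepsilon$ small, this is the required bound. Invoking \cref{prop:inductive-step} then establishes the inductive step and completes the proof.
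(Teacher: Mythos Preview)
Your overall strategy matches the paper's: induct on $k+\ell$ with $\alpha(k,\ell)=2^{C_\varepsilon r^2}\exp(-\phi_{r,\varepsilon}(k,\ell))$, dispose of the regimes $\ell/k<\varepsilon$ and $k$ small via Erd\H{o}s--Szekeres, and in the main regime verify the hypotheses of \cref{prop:inductive-step} using \cref{lem:rho-bound,lem:alpha-smooth}. The computation $kb+\ell c=\rho_{r,\varepsilon}(\ell/k)+\varepsilon/4$ and the use of $\rho_{r,\varepsilon}(\ell/k)\le (r-4)\ell/(2k)$ are exactly right.

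There is, however, a real gap in your treatment of the $\alpha^\ast$ correction. The multiplicative error is $\alpha/\alpha^\ast\le 1+1/n$ (not $1+O(1/(n\alpha))$), and you then weaken this to $1+O(1/(k+\ell))$, so that $\beta=b+O(1/(k+\ell))$ and $\gamma=c+O(1/(k+\ell))$. Propagating, $k\beta+\ell\gamma=kb+\ell c+O(1)$ with implicit constant at least $1$. But the target inequality $k\beta+\ell\gamma\le(r-3)\ell/(2k)$ has slack only $\ell/(2k)-\varepsilon/4$, which at $\ell/k=\varepsilon$ equals $\varepsilon/4<1/8$; an $O(1)$ error cannot be absorbed. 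Your appeal to ``taking $c_\varepsilon$ small'' does not help: $c_\varepsilon$ is the fixed constant produced by \cref{prop:inductive-step} and does not enter this inequality at all.

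The fix is immediate once you keep the honest size of the correction: $n\ge \lfloor e^{-r\log k}\binom{k+\ell}{k}\rfloor$ is exponentially large in $k$, so $(k+\ell)/n$ is negligible compared to $\varepsilon/4$. The paper makes this clean by setting $\beta=b+\varepsilon/(4(k+\ell))$ and $\gamma=c+\varepsilon/(4(k+\ell))$ explicitly; then $(1+1/n)(1+mb)\le 1+m\beta$ holds because $1/n\ll\varepsilon/(4(k+\ell))$, and one gets $k\beta+\ell\gamma=\rho_{r,\varepsilon}(\ell/k)+\varepsilon/2\le(r-3)\ell/(2k)$ exactly.
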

\begin{proof}
Let $\alpha(x,y) = 2^{C_\varepsilon r^2}\exp(-\phi_{r,\varepsilon}(k,\ell))$, where $C_\varepsilon > 0$ will be chosen later.

First we verify \cref{eq:ramsey-r} if $\min(k,\ell)\le 2^{C_\varepsilon r}$. Noting that $\phi_{r,\varepsilon}$ hence $\alpha$ is symmetric, as are the Ramsey numbers, it suffices to check it for $k\ge\ell$ and $\ell\le 2^{C_\varepsilon r}$. By \cref{lem:rho-bound} we have
\[\alpha(k,\ell) = 2^{C_\varepsilon r^2}\exp(-\rho_{r,\varepsilon}(\ell/k)\log(k+\ell))\ge 2^{C_\varepsilon r^2}(k+\ell)^{-(r-4)\ell/(2k)}\ge 2^{C_\varepsilon r^2}(2\ell)^{-(r-4)/2}\ge 1,\]
where the second inequality uses that $(k+\ell)^{\ell/k}$ is decreasing in $k$ hence achieves its maximum in the region $k\ge\ell$ (fixing $\ell$) when $k = \ell$. The result follows from the Erd\H{o}--Szekeres bound \cref{eq:erdos-szekeres}.

Next we verify \cref{eq:ramsey-r} if $\min(k/\ell,\ell/k) < \varepsilon$. By symmetry we can assume $\ell\le k$, hence $\ell/k\le\varepsilon$. In this case, $\rho_{r,\varepsilon}(\ell/k) = 0$, so $\alpha(k,\ell) = 2^{C_\varepsilon r}\ge 1$ and again the result follows from the Erd\H{o}--Szekeres bound \cref{eq:erdos-szekeres}.

Now suppose $\min(k,\ell)\ge 2^{C_\varepsilon r}$ and $\min(k/\ell,\ell/k)\ge\varepsilon$. We claim that the bound \cref{eq:ramsey-r} follows from \cref{eq:ramsey-r} for $(k-m,\ell)$ and $(k,\ell-m)$ when $m\in\{1,2,r-1\}$. This clearly finishes, since iterating this yields the result (noting that the above took care of any necessary base cases for such an iteration). Without loss of generality we can suppose $\ell\le k$ by symmetry, and thus in fact $\ell/k\in[\varepsilon,1]$. Let $n = \lfloor\alpha(k,\ell)\binom{k+\ell}{k}\rfloor$.

In order to prove this claim, it suffices to check that $\alpha$ satisfies the conditions of \cref{prop:inductive-step}, namely, for some $\beta,\gamma\in\mb{R}$ we need that $\alpha$ is $(\beta,\gamma)$-smooth for $(k,\ell,r)$, that $|\beta|+|\gamma|\le r(\log k)^2/k$, that $\alpha(k,\ell)\ge\exp(-r(\ell/k)\log k)$, that $r\le c_{\varepsilon,\ref{prop:inductive-step}}\log k$, and that $k\beta + \ell\gamma\le (r-3)\ell/(2k)$.

We have $\phi_{r,\varepsilon}(\ell/k)\le (r-4)/2\cdot(\ell/k)\log(k+\ell)$ by \cref{lem:rho-bound}, so the condition $\alpha(k,\ell)\ge\exp(-r(\ell/k)\log k)$ is satisfied (since, e.g., $k+\ell\le k^2$). If $C_\varepsilon$ is chosen sufficiently large, we see $r\le c_{\varepsilon,\ref{prop:inductive-step}}\log k$ will hold, and also $\min(k,\ell)\ge 2^{C_\varepsilon r}\ge 200r^4/\varepsilon^2$. Thus, by \cref{lem:alpha-smooth}, we have for
\begin{align*}
b = \frac{4\rho_{r,\varepsilon}(\ell/k)+\varepsilon}{4(k+\ell)} - \frac{\ell\log(k+\ell)}{k^2}\rho_{r,\varepsilon}'(\ell/k),&\qquad c = \frac{4\rho_{r,\varepsilon}(\ell/k)+\varepsilon}{4(k+\ell)} + \frac{\log(k+\ell)}{k}\rho_{r,\varepsilon}'(\ell/k),\\
\beta = \frac{2\rho_{r,\varepsilon}(\ell/k)+\varepsilon}{2(k+\ell)} - \frac{\ell\log(k+\ell)}{k^2}\rho_{r,\varepsilon}'(\ell/k),&\qquad\gamma = \frac{2\rho_{r,\varepsilon}(\ell/k)+\varepsilon}{2(k+\ell)} + \frac{\log(k+\ell)}{k}\rho_{r,\varepsilon}'(\ell/k)
\end{align*}
and for all $m\in\{1,2,r-1\}$ that
\[\frac{\alpha(k-m,\ell)}{\alpha^\ast(k,\ell)}\le(1+1/n)\frac{\alpha(k-m,\ell)}{\alpha(k,\ell)}\le (1+1/n)(1+mb)\le 1+m\beta\]
and similarly
\[\frac{\alpha(k,\ell-m)}{\alpha^\ast(k,\ell)}\le 1+m\gamma.\]
Here we used that $n$ is significantly larger than $k+\ell$, valid given the bounds on $k,\ell,\alpha$. Therefore, for this choice of $\beta,\gamma$, we see that $\alpha$ is $(\beta,\gamma)$-smooth for $(k,\ell,r)$. Furthermore, we see that $|\beta|$ and $|\gamma|$ are bounded in magnitude by $O_\varepsilon(r(\log k)/k)$ due to their definitions and \cref{lem:rho-bound}, hence for $k$ sufficiently large (i.e., $C_\varepsilon$ sufficiently large) we have $|\beta|+|\gamma|\le r(\log k)^2/k$.

Therefore it suffices to verify that $k\beta + \ell\gamma\le (r-3)\ell/(2k)$, and the proof will be completed. But using the explicit values above, we compute
\[k\beta + \ell\gamma = \rho_{r,\varepsilon}(\ell/k) + \frac{\varepsilon}{2}\le\frac{r-4}{2}\cdot\frac{\ell}{k}+\frac{\varepsilon}{2}\le\frac{r-3}{2}\cdot\frac{\ell}{k},\]
using \cref{lem:rho-bound} and $\ell/k\in[\varepsilon,1]$. We are finished.
\end{proof}
Finally, we prove \cref{thm:main-2}. We note that
\[\rho_{r,\varepsilon/2}(x)\ge\frac{(r-4)\varepsilon^3}{32}\]
for $\varepsilon\in(0,1/2)$ and $x\in[\varepsilon,1]$.
\begin{proof}[Proof of \cref{thm:main-2}]
Suppose $\ell/k\in[\varepsilon,1]$. By \cref{thm:ramsey-r}, we have for any $r\ge 5$ that, letting $C_\varepsilon = C_{\varepsilon/2,\ref{thm:ramsey-r}}$,
\[R(k+1,\ell+1)\le 2^{C_\varepsilon r^2}\exp(-\phi_{r,\varepsilon/2}(k,\ell))\binom{k+\ell}{k}\le 2^{C_\varepsilon r^2}\exp(-(r-4)\varepsilon^3\log k/32)\binom{k+\ell}{k}.\]
Now, choosing $r = \varepsilon^3\log k/(64C_\varepsilon)$ (assuming $k$ is large enough that $r\ge 5$), we obtain
\[R(k+1,\ell+1)\le e^{-c_\varepsilon(\log k)^2}\binom{k+\ell}{k}\]
for appropriate $c_\varepsilon > 0$.
\end{proof}

\bibliographystyle{amsplain0.bst}
\bibliography{main.bib}

\end{document}